\newtheorem{lem}{Lemma}[section]
\newtheorem{prop}{Proposition}[section] 
\newtheorem{rem}{Remark}[section] 
\newtheorem{tm}{Theorem}[section] 
\newcommand{\ZZ}{\mathbb{Z}}
\newcommand{\QQ}{\mathbb{Q}}
\newcommand{\CC}{\mathbb{C}}
\newcommand{\PP}{\mathbb{P}}
\newcommand{\FF}{\mathbb{F}}
\newcommand{\e}{\varepsilon}
\newcommand{\w}{\omega}
\newcommand{\la}{\lambda}
\newcommand{\HH}{\mathbb{H}}
\newcommand{\DD}{\mathbb{D}}
\newcommand{\GL}{\mathrm{GL}}
\newcommand{\SL}{\mathrm{SL}}
\newcommand{\HG}{\mathrm{H}\Gamma}
\newcommand{\I}{\mathrm{I}}
\newcommand{\IM}{\mathrm{Im}}
\newcommand{\RE}{\mathrm{Re}}
\newcommand{\OM}{\mathrm{O}^+(M)}
\newcommand{\SOM}{\mathrm{SO}^+(M)}
\newcommand{\OMK}{\mathrm{O}^+_{K3}(M)}
\newcommand{\SOMK}{\mathrm{SO}^+_{K3}(M)}
\newcommand{\OME}{\mathrm{O}^+_{Enr}(M)}
\newcommand{\SOME}{\mathrm{SO}^+_{Enr}(M)}
\begin{document}
\title{Moduli space of Hessian K3 surfaces and arithmetic quotients}
\date{}
\author{Kenji Koike, Yamanashi University}  
\maketitle
\begin{abstract}
Let $X$ be a hypersurface in $\PP^n$ given by $F(t_0, \cdots, t_n) = 0$. 
The Hessian hypersurface $H(X)$ of $X$ is defined by 
$\det (\partial^2 F / \partial t_i \partial t_j) = 0$.
If X is a cubic surface, then $H(X)$ is a quartic surface which is classically known 
as a symmetroid. The minimal desingularization of $H(X)$ is a K3 surface and it has an Enriques involution.  
We study the moduli space of them as arithmetic quotients.
\end{abstract}
\section{Hessian K3 surfaces of cubic surfaces} 
We first review classical facts and recent results on cubic surfaces and Hessian K3 surfaces.
\subsection{The Sylvester pentahedral form and classical invariants}
For quaternary cubic forms 
\[
 \sum_{i+j+k+l=3} a_{ijkl}X^i Y^j Z^k W^l \in \CC[X,Y,Z,W],
\]
the ring of $\SL_4(\CC)$-invariants $\CC[a_{ijkl}]^{\SL_4(\CC)}$ is 
\[
  \CC[I_8, I_{16}, I_{24}, I_{32}, I_{40}, I_{100}]
\] 
where $I_n$ is an invariant polynomial of degree $n$ (\cite{Sa}, \cite{Hu}). We have 
$I_{100}^2 \in \CC[I_8, I_{16}, I_{24}, I_{32}, I_{40}]$ and $I_8, \cdots, I_{40}$ are algebraically independent. 
Hence the moduli space of cubic surfaces is isomorphic to the weighted 
projective space $\PP(1:2:3:4:5)$. To construct invariants $I_n$ explicitly, the following representation of cubic 
surfaces is useful. A general cubic surface is written as a complete intersection
\[
 S_{\la} \ : \ X_0 + \cdots + X_4 = 0, \quad 
\la_0 X_0^3 + \cdots + \la_4 X_4^3 = 0 
\]
in $\PP^4$ with $\la_0 \cdots \la_4 \ne 0$, which is called the Sylvester form. For a given cubic surface, the parameter 
$\lambda = [\la_0:\cdots:\la_4] \in \PP^4$ is uniquely determined up to permutations. Let $\sigma_i$ be 
the $i$-th elementary symmetric polynomial in $\la_0, \cdots, \la_4$. Then we have
\[
 I_8 = \sigma_4^2 - 4 \sigma_3 \sigma_5, \quad I_{16} = \sigma_5^3 \sigma_1, \quad
I_{24} = \sigma_5^4 \sigma_4, \quad I_{32} =\sigma_5^6 \sigma_2, \quad
I_{40} = \sigma_5^8, \quad I_{100} = \sigma_5^{18} \prod_{0 \leq i < j \leq 4} (\la_i - \la_j)
\]
\begin{rem}
If we have $\sigma_5 = 0$, for example $\la_0 = 0$, then $S_{\la}$ is equivalent to 
a diagonal surface 
\[
 \la_1 X_1^3 + \cdots + \la_5 X_5^3 = 0.
\]
\end{rem}
\subsection{The singular locus}
Let $\Delta_{Sing}(\la)$ be a polynomial
\[
 \Delta_{Sing} (\la) = (\la_0 \la_1 \la_2 \la_3 \la_4)^8 
\prod (\frac{1}{\sqrt{\la_0}} + \frac{\varepsilon_1}{\sqrt{\la_1}} + \frac{\varepsilon_2}{\sqrt{\la_2}} 
+ \frac{\varepsilon_3}{\sqrt{\la_3}} + \frac{\varepsilon_4}{\sqrt{\la_4}})
\]
of degree $32$, where $(\varepsilon_1, \cdots, \varepsilon_4 )$ runs over $\{ \pm1 \}^4$. 
Then $S_{\lambda}$ is smooth iff $\Delta_{Sing}(\la) \ne 0$. Hence we may regard an open set 
$\Lambda = \{ \la \in \PP^4 \ | \ \Delta_{Sing}(\la) \ne 0,\ \sigma_5 \ne 0 \}$ as a parameter space of smooth cubic 
surfaces having Sylvester form. 
\begin{rem}
In terms of classical invarinats, we have
\[
 \Delta_{sing}(\la) = (I_8^2 - 2^6 I_{16})^2 - 2^{14} (I_{32} + 2^{-3}I_8 I_{24}).
\]
\rm{(}As pointed out in \cite{DvG}, the exponent $-3$ in the formula is omitted in \cite{Sa}, p. 198. \rm{)}
\end{rem}
\subsection{Eckardt points}
We have an important geometric interpretation for skew invariant $I_{100}$.
A smooth $S_{\la} \ (\la \in \Lambda)$ has an Eckardt point 
iff $\la_i = \la_j$ for some $i \ne j$, that is, iff 
\[
 \Delta(\la) = \prod_{1 \leq i<j \leq 5} (\la_i - \la_j) = 0.
\]
(If there exists a plane section of a cubic surface which consists of
three lines intersecting at a single point, then this point is called an Eckardt point.) 
\subsection{Hessian K3 surfaces}
The Hessian of $S_{\la}$ is given by
\[
 H_{\la} \ : \ \ X_0 + \cdots + X_4 = 0, \quad 
\frac{1}{\la_0 X_0} + \cdots + \frac{1}{\la_4 X_4} = 0.
\]
It has singularities at $P_{ijk}^{\la} = \{ X_i = X_j = X_k = 0 \} \cap H_{\la}$, and contains lines 
$L_{ij}^{\la} = \{X_i = X_j = 0\} \cap H_{\la}$. 
If $S_{\la}$ is smooth, then ten nodes $P_{ijk}^{\la}$ are all of singularities of 
$H_{\la}$ and we obtain K3 surface $\tilde{H_{\la}}$ by resolving them. 
The transcendental lattice of a general $\tilde{H}_{\la}$ was given in \cite{DoKe}. 
\begin{tm}[Dolgachev and Keum]
Let $\tilde{L}_{ij}^{\la}$ be the proper transform of $L_{ij}$, 
and $E_{ijk}^{\la}$ be the exceptional curve blown-down to $P_{ijk}^{\la}$. 
Let $N_{\la} \subset \mathrm{H}^2(\tilde{H_{\la}}, \ZZ)$ be a sublattice generated by $[\tilde{L}_{ij}^{\la}]$ 
and $[E_{ijk}^{\la}]$.
For a general $\la \in \Lambda$, the Neron-Severi group $\mathrm{NS}(\tilde{H_{\la}})$ coincides 
with $N_{\la}$. In this case, the transcendental lattice 
$M_{\la} = \mathrm{NS}(\tilde{H_{\la}})^{\perp} \subset \mathrm{H}^2(\tilde{H_{\la}}, \ZZ)$ is 
isomorphic to $\mathrm{U} \oplus \mathrm{U}(2) \oplus \mathrm{A}_2(2)$ $(\ZZ^6$ with a bilinear form
$Q = \begin{bmatrix} 0 & 1 \\ 1 & 0 \end{bmatrix} \oplus \begin{bmatrix} 0 & 2 \\ 2 & 0 \end{bmatrix} \oplus 
\begin{bmatrix} -4 & 2 \\ 2 & -4 \end{bmatrix})$.
\end{tm}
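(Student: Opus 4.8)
The plan is to read the lattice $N_\la$ off the configuration of curves, to compute its orthogonal complement in the K3 lattice by Nikulin's theory of discriminant forms, and then to upgrade the evident inclusion $N_\la\subseteq\mathrm{NS}(\tilde H_\la)$ to an equality for a very general $\la$ by a Hodge-theoretic dimension count.

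I would begin with the intersection matrix. The ten nodes $P_{ijk}^\la$ and the ten lines $L_{ij}^\la$ form the classical $(10_3)$ configuration with its $S_5$-symmetry: each line carries three nodes, each node lies on three lines, and after resolving the nodes the proper transforms $\tilde L_{ij}^\la$ are pairwise disjoint $(-2)$-curves, the $E_{ijk}^\la$ are pairwise disjoint $(-2)$-curves, and $\tilde L_{ij}^\la\cdot E_{klm}^\la=1$ exactly when $\{i,j\}\subset\{k,l,m\}$. Labelling the lines by the $2$-subsets of $\{0,\dots,4\}$ and the exceptional curves by the complementary $3$-subsets (hence again by $2$-subsets), this last incidence is precisely the Petersen graph, so the Gram matrix of the twenty generators has $-2$ on the diagonal, the Petersen adjacency matrix $A$ in the off-diagonal block pairing lines with exceptional curves, and zeros elsewhere off the diagonal. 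Its eigenvalues are $-2\pm\mu$ as $\mu$ ranges over the Petersen spectrum $\{3,\,1^{(5)},\,(-2)^{(4)}\}$, so the form has a $4$-dimensional radical and a single positive eigenvalue; hence $N_\la$ has rank $16$ and signature $(1,15)$, and, choosing an explicit integral basis among the $\tilde L_{ij}^\la$ and $E_{ijk}^\la$, one reads off its discriminant group (of order $48$) and quadratic form $q_{N_\la}$.

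Next I would identify the orthogonal complement. Because the rank of $N_\la$ and the length of its discriminant group satisfy Nikulin's bounds, $N_\la$ embeds primitively into $\mathrm H^2(\tilde H_\la,\ZZ)\cong\mathrm U^{\oplus3}\oplus\mathrm E_8(-1)^{\oplus2}$, uniquely up to isometry, and its orthogonal complement is the unique even lattice of signature $(2,4)$ with discriminant form $-q_{N_\la}$; a direct computation matches it with $\mathrm U\oplus\mathrm U(2)\oplus\mathrm A_2(2)$. A more structural route exploits the Enriques involution $\iota$: its invariant lattice is $\mathrm U(2)\oplus\mathrm E_8(-2)$, the holomorphic $2$-form is anti-invariant, so the transcendental lattice sits in the $(-1)$-eigenspace $\mathrm U\oplus\mathrm U(2)\oplus\mathrm E_8(-2)$; locating a primitive $\mathrm E_6(-2)\subset\mathrm{NS}(\tilde H_\la)$ and using that the orthogonal complement of $\mathrm E_6$ in $\mathrm E_8$ is $\mathrm A_2$, one again gets $\mathrm U\oplus\mathrm U(2)\oplus\mathrm A_2(2)$.

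The crux is the genericity. The twenty $(-2)$-curves exist on $\tilde H_\la$ for every $\la\in\Lambda$ with combinatorially constant intersection matrix, so $\rho(\tilde H_\la)\ge16$ throughout; for the opposite inequality it suffices to produce a single $\la_0\in\Lambda$ with $\rho(\tilde H_{\la_0})=16$ — say by reduction modulo a suitable prime together with a point count bounding the geometric Picard number, by an explicit elliptic fibration, or by showing the Hessian construction has finite fibres on moduli so that the period map dominates the four-dimensional period domain of $\mathrm U\oplus\mathrm U(2)\oplus\mathrm A_2(2)$ — and then to invoke upper semicontinuity of the Picard number, the locus $\{\rho\ge17\}$ being a countable union of proper subvarieties. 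One must still check that $N_\la$ is \emph{saturated} in $\mathrm H^2(\tilde H_\la,\ZZ)$, which is not a formal consequence of the abstract lattice: since $48=2^4\cdot3$ the only possible proper enlargement has index $2$, and this is excluded geometrically — most cleanly from the $2$-elementary structure of the $\iota$-invariant lattice, or by checking directly on the curve classes that no half-integral combination of them is integral. Together these give $\mathrm{NS}(\tilde H_\la)=N_\la$, and hence $M_\la\cong\mathrm U\oplus\mathrm U(2)\oplus\mathrm A_2(2)$, for $\la$ outside a countable union of proper subvarieties of $\Lambda$. I expect the first two steps to be mechanical and this last one to carry the real weight; within it, both the dominance of the period map and the saturation of $N_\la$ — the latter being what turns a finite-index statement into an on-the-nose equality — are the points that genuinely need work, and the Enriques involution seems the most efficient tool for handling it.
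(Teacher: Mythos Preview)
The paper does not give a proof of this theorem: it is stated with attribution to Dolgachev and Keum and cited from \cite{DoKe} without argument, serving only as input for the period-map set-up in \S2. There is therefore nothing in the paper to compare your proposal against.

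That said, your outline is essentially the one carried out in \cite{DoKe}. The Petersen-graph description of the $20\times20$ Gram matrix, the resulting rank $16$ and signature $(1,15)$, and the identification of $N_\la^{\perp}$ via discriminant forms are all standard and correct; the discriminant order $48=2^4\cdot3$ is right, and your observation that any proper even overlattice would have index $2$ is the key arithmetic constraint for primitivity. The alternative route through the Enriques involution --- placing the transcendental lattice inside $\mathrm{U}\oplus\mathrm{U}(2)\oplus\mathrm{E}_8(-2)$ and cutting down by the algebraic anti-invariant classes --- is exactly how Dolgachev and Keum organize the computation; the sublattice you call $\mathrm{E}_6(-2)$ is the anti-invariant part of $N_\la$. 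For the genericity step, the cleanest argument (and the one implicit in the literature) is the dominance of the period map onto the four-dimensional domain for $\mathrm{U}\oplus\mathrm{U}(2)\oplus\mathrm{A}_2(2)$, which follows from local Torelli together with the fact that the four Sylvester parameters vary the Hodge structure nontrivially; your other suggestions (point counts, explicit fibrations) would also work but are heavier than needed.
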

In \cite{DvG}, transcendental lattices for several interesting subfamily were studied.
\begin{tm}[Dardanelli and van Geemen] \label{th2}
The transcendental lattice of a Hessian K3 surface of \\
{\rm (1)} a general cubic surface with a node is 
$T_{node} = \left< 2 \right> \oplus \left< 6 \right> \oplus \left< -2 \right>^3$, \\
{\rm (2)} a general smooth cubic surface with an Eckardt point is 
$T_{Eck} = \mathrm{U} \oplus \mathrm{U}(2) \oplus \left< -12 \right>$, \\
{\rm (3)} a general cubic surface which does not admit Sylvester form
is $T_{NS} = \mathrm{U} \oplus \mathrm{U}(2) \oplus \left< -4 \right>$
{\rm (}Such a surface is defined by 
$X_1^3 + X_2^3 + X_3^3 - X_0^2 (\la_0 X_0 + 3 \la_1 X_1 + 3 \la_2 X_2 + 3 \la_3 X_3) = 0$ {\rm )}.
\end{tm}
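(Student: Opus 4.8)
\noindent\emph{Strategy of proof.} The three assertions have a common shape: each of the listed families is a $3$-dimensional locus inside the $4$-dimensional moduli of cubic surfaces, and over it the Picard number of $\tilde H_{\la}$ jumps by one, from $16$ to $17$, so that the transcendental lattice drops from rank $6$ to a rank-$5$ lattice of signature $(2,3)$ --- exactly the signature of each lattice in the statement. The plan is, in each case, to (i) exhibit one extra algebraic class $v\in\mathrm H^2(\tilde H_{\la},\ZZ)$ beyond the Dolgachev--Keum lattice $N_{\la}$ of lines and exceptional curves; (ii) compute the primitive orthogonal complement of $N_{\la}\oplus\ZZ v$ in $\mathrm H^2(\tilde H_{\la},\ZZ)$ and identify it with the stated lattice; and (iii) prove that for the very general member of the family there is no further algebraic class, so that this complement is precisely the transcendental lattice.

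For cases (1) and (2) the subfamily lies in the closure of the Sylvester locus $\Lambda$, so by specialisation $\mathrm{NS}(\tilde H_{\la})\supseteq N_{\la}$ and the transcendental lattice embeds primitively in $M_{\la}=\mathrm U\oplus\mathrm U(2)\oplus\mathrm A_2(2)$ as a rank-$5$ sublattice; thus step (ii) becomes: locate the new class $v$ inside $M_{\la}$ and compute $\langle v\rangle^{\perp}$ there. In the Eckardt case I would take $\la_0=\la_1$, use that the coordinate transposition $(0\,1)$ is then an automorphism of $H_{\la}$ and of $\tilde H_{\la}$, and build $v$ from it --- for instance from $[\tilde L_{02}^{\la}]-[\tilde L_{12}^{\la}]$, or from a component of $H_{\la}\cap\{X_0=X_1\}$; one then checks that $v$ maps to a primitive vector of square $-4$ lying in the $\mathrm A_2(2)$ summand, whose orthogonal complement in that summand is $\langle -12\rangle$, whence $\langle v\rangle^{\perp}_{M_{\la}}=\mathrm U\oplus\mathrm U(2)\oplus\langle -12\rangle=T_{Eck}$. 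In the nodal case I would let $\la$ tend to the discriminant $\Delta_{Sing}$, observe that $H_{\la}$ then acquires an extra rational double point, contributing a new $(-2)$-class $v$, and run the analogous computation: a rank-$5$ primitive sublattice of $M_{\la}$ of signature $(2,3)$ with discriminant group of order $96$ and the correct set of roots is then identified with $\langle 2\rangle\oplus\langle 6\rangle\oplus\langle -2\rangle^{3}=T_{node}$ via its discriminant form.

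Case (3) has to be treated directly, since a general such cubic is not of Sylvester type and $M_{\la}$ is not available. Starting from $X_1^3+X_2^3+X_3^3-X_0^2(\la_0X_0+3\la_1X_1+3\la_2X_2+3\la_3X_3)=0$, I would compute the $4\times4$ Hessian determinant explicitly, read off the singular locus of the resulting quartic surface $H$, check that a general member has only rational double points, resolve them, and write out a generating set for $\mathrm{NS}(\tilde H)$ --- the lines and exceptional curves together with the conic, respectively elliptic-fibration, classes visible from the equation --- along with their intersection matrix; the orthogonal complement of this rank-$17$ lattice in $\mathrm H^2(\tilde H,\ZZ)$ is then computed and shown to be $\mathrm U\oplus\mathrm U(2)\oplus\langle -4\rangle=T_{NS}$. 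Alternatively one exhibits an explicit elliptic fibration on $\tilde H$ and pins down $\mathrm{NS}$ through its reducible fibres and Mordell--Weil group.

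Finally, for step (iii), common to all three cases, note that the subfamily has dimension $3$, equal to the dimension $5-2$ of the period domain of the candidate rank-$5$ lattice $T$. Every member of the subfamily has Picard number $\ge 17$ (seventeen independent classes are in hand), while the period map of the full Hessian family is generically finite onto its image --- the arithmetic-quotient description that the rest of the paper develops --- and on the subfamily it takes values in the $3$-dimensional sub-domain cut out by $v$; it therefore suffices to check that this restricted period map is still generically finite, i.e.\ that the subfamily is not contained in a smaller Noether--Lefschetz locus, which one verifies either by an infinitesimal computation of its differential or by producing a single member whose Picard number equals $17$. The main obstacle is step (ii) in cases (1) and (2) --- determining the $\mathrm O(M_{\la})$-orbit of the new class $v$, and not merely $v^2$, since lattices of the same rank and discriminant but different signature also contain vectors of that square --- together with the explicit geometric analysis of the Hessian quartic demanded by case (3).
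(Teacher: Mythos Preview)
The paper does not prove this theorem: it is stated as a result of Dardanelli and van Geemen and cited from [DvG], with no argument given in the present paper. What the paper \emph{does} contain, later in the proof of the Proposition on Heegner divisors in Section~4, is precisely the lattice computation that constitutes your step~(ii): it exhibits vectors $e_1-e_2$, $e_5$, and $e_5+2e_6$ in $M=\mathrm U\oplus\mathrm U(2)\oplus\mathrm A_2(2)$, of squares $-2$, $-4$, and $-12$ respectively, and verifies that their orthogonal complements in $M$ are $T_{node}$, $T_{Eck}$, and $T_{NS}$. The geometric content of the theorem --- your steps~(i) and~(iii), namely that the period of a general member of each family is actually orthogonal to the corresponding vector and to nothing more --- is entirely imported from [DvG].

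Your sketch is therefore a plausible reconstruction of what [DvG] must do, and your guesses for the squares of the extra class in cases~(1) and~(2) land on the right values. One remark on case~(3): your instinct that the non-Sylvester case forces an \emph{ab initio} analysis because $M_{\la}$ is unavailable is understandable geometrically, but from the paper's period-domain viewpoint the situation is uniform --- the non-Sylvester locus is simply another Heegner divisor in $\DD_M^+$, cut out by the $(-12)$-vector $e_5+2e_6$, and the orthogonal-complement computation is no harder than in the other two cases. Of course, showing geometrically that a non-Sylvester Hessian K3 still carries a sublattice isometric to $N$ inside its N\'eron--Severi group, so that its period lands in $\DD_M^+$ in the first place, does require direct work of the kind you describe; that is exactly what is deferred to [DvG].
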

In \cite{Ro}, the Kummer locus was determined.
\begin{tm}[Rosenberg]
The Hessian $H_{\la} \ (\la \in \Lambda)$ 
is the blow-up of a Weber hexad on a Kummer surface iff
\[
 \Delta_{Km}(\mu) = \sum_{i = 0}^4 \mu_i^3 - \sum_{i \ne j} \mu_i^2 \mu_j + 
2 \sum_{i \ne j \ne k \ne i} \mu_i \mu_j \mu_k = 0
\]
where $\mu_i = \la_i^{-1}$. In classical invariants, this locus is given by $I_8 I_{24} + 8 I_{32} = 0$. 
\end{tm}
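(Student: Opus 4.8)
The statement has two parts: a geometric characterization of the Kummer locus, and its reformulation in classical invariants. The second part is elementary, so I dispatch it first. Write $\sigma_i=\sigma_i(\la)$, put $\mu_i=\la_i^{-1}$, and let $e_k=e_k(\mu_0,\dots,\mu_4)$; one has $e_k=\sigma_{5-k}/\sigma_5$. Expanding the displayed cubic by Newton's identities gives $\Delta_{Km}(\mu)=e_1^3-4e_1e_2+8e_3=\sigma_5^{-3}(\sigma_4^3-4\sigma_3\sigma_4\sigma_5+8\sigma_2\sigma_5^2)$, while the Sylvester-coordinate formulas for $I_8,I_{24},I_{32}$ give $I_8I_{24}+8I_{32}=\sigma_5^4\sigma_4(\sigma_4^2-4\sigma_3\sigma_5)+8\sigma_5^6\sigma_2=\sigma_5^7\,\Delta_{Km}(\mu)$. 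Since $\sigma_5\ne0$ on $\Lambda$ the two vanishing loci coincide, so it remains to prove the geometric statement.

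For the geometric part I would combine Hodge theory with Nikulin's description of Kummer surfaces: a projective K3 surface $Y$ is a Kummer surface if and only if it contains sixteen mutually disjoint smooth rational curves, equivalently if and only if $T(Y)\cong N(2)$ for some even lattice $N$, and then $Y\cong\widetilde{\mathrm{Km}}(A)$ for an abelian surface $A$ with $T(A)\cong N$. By the theorem of Dolgachev and Keum, $T(\widetilde{H_\la})\cong\mathrm{U}\oplus\mathrm{U}(2)\oplus\mathrm{A}_2(2)$ for very general $\la$; this lattice contains a vector of self-intersection $2$, hence is not of the form $N(2)$ (every self-intersection in $N(2)$ being divisible by $4$), so the very general Hessian is not a Kummer surface and the Kummer condition defines a proper closed subvariety of $\Lambda$. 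The plan is to show this subvariety equals $\{\Delta_{Km}(\mu)=0\}$: via the period map of the family $\{\widetilde{H_\la}\}$ into the orthogonal modular variety attached to $\mathrm{U}\oplus\mathrm{U}(2)\oplus\mathrm{A}_2(2)$ (which one can make explicit from \cite{DoKe}), one locates the Heegner-type divisor along which $\rho$ jumps and the new algebraic class, together with $N_\la$, cuts $T(\widetilde{H_\la})$ down to a rank-five lattice of the shape $N(2)$; one checks this divisor pulls back to $\{\Delta_{Km}(\mu)=0\}$ and that the resulting $N$ is the transcendental lattice of a generic principally polarized abelian surface, so that $A_\la=\mathrm{Jac}(C_\la)$ for a genus-two curve $C_\la$. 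Surjectivity of the period map and the global Torelli theorem then yield $\widetilde{H_\la}\cong\widetilde{\mathrm{Km}}(A_\la)$, and the same computation read backwards yields the converse.

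It remains to identify this isomorphism geometrically with the classical ``blow-up of a Weber hexad''. The sixteen nodes and sixteen tropes of $\mathrm{Km}(A_\la)\subset\PP^3$ carry the usual $16_6$ incidence and are permuted by the group $S_6$ of the six Weierstrass points of $C_\la$; a Weber hexad $W$ is one of the distinguished six-element sets of nodes, and its complement consists of ten nodes incident to ten of the tropes in the corresponding pattern. On the other side, $H_\la$ has exactly ten nodes $P_{ijk}^\la$ and ten lines $L_{ij}^\la$, permuted by the $S_5$ of the Sylvester indices, which embeds into $S_6$; I would match the ten $P_{ijk}^\la$ with the ten-node complement of a Weber hexad $W$ and the ten $L_{ij}^\la$ with the ten complementary tropes, then realize $H_\la$ as the image of $\mathrm{Km}(A_\la)$ under the Cremona transformation of $\PP^3$ given by the web of quadrics through $W$ (a realization of ``blow-up of a Weber hexad''), verifying that this image is a ten-nodal quartic symmetroid and identifying it with $H_\la$ via the Dolgachev--Keum lattice data or the defining equations. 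The reverse construction recovers $C_\la$, hence $\Delta_{Km}(\mu)=0$, from any Hessian so presented.

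The main obstacle is making the period computation precise enough to pin the jumping divisor down to $\{\Delta_{Km}(\mu)=0\}$ rather than merely to ``a divisor of the right lattice-polarization type'', together with the combinatorial bookkeeping of the Weber hexad and of the ten nodes and tropes under $S_5\hookrightarrow S_6$ and the base-point analysis of the quadric web. Neither step is deep, but both are intricate; the argument of \cite{Ro} presumably carries out the birational construction in coordinates, which is the more hands-on of the two routes and also the one that most directly produces the ``Weber hexad'' description in the statement.
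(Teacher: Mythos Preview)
The paper does not prove this theorem: it is stated as a result of Rosenberg and attributed to \cite{Ro}, with no argument given here. So there is no ``paper's own proof'' to compare your sketch against.

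That said, a few comments. Your verification of the invariant identity is correct: with $e_k(\mu)=\sigma_{5-k}(\la)/\sigma_5(\la)$ one gets $\Delta_{Km}(\mu)=e_1^3-4e_1e_2+8e_3$ and hence $I_8I_{24}+8I_{32}=\sigma_5^{7}\Delta_{Km}(\mu)$, so on $\Lambda$ the two loci agree. (Strictly speaking this reads the third sum in the statement as being over unordered triples; with the ordered-triple reading the coefficient of $e_3$ would be $18$ rather than $8$ and the identity would fail, so you are implicitly fixing a convention the paper leaves ambiguous.)

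Your Hodge-theoretic plan for the geometric part is reasonable and, interestingly, dovetails with what the paper does later in Section~4: there the Kummer locus is identified with the Heegner divisor orthogonal to $3e_2+e_5+2e_6$, whose orthogonal complement is $T_{Km}=\mathrm{U}(2)\oplus\mathrm{U}(2)\oplus\langle -4\rangle$, i.e.\ exactly $N(2)$ with $N=\mathrm{U}\oplus\mathrm{U}\oplus\langle -2\rangle$, the transcendental lattice of a generic principally polarized abelian surface. This confirms the lattice step of your outline. However, the paper \emph{uses} Rosenberg's theorem to label that Heegner divisor rather than deriving the equation $\Delta_{Km}(\mu)=0$ from the period map; the step you flag as the ``main obstacle''---pulling the Heegner divisor back to the parameter space and matching it with $\{\Delta_{Km}=0\}$---is precisely what is not carried out here. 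Rosenberg's own argument in \cite{Ro} is, as you guess, the explicit birational one via the web of quadrics through a Weber hexad, not the period-map route.
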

\begin{rem}
We have a birational transformation
\[
\iota : H_{\la} \longrightarrow H_{\la}, \quad
[X_0: \cdots : X_4] \mapsto [\frac{1}{\la_0 X_0}: \cdots : \frac{1}{\la_4 X_4}]
\]
which interchanges $P_{ijk}$ with $L_{mn}$ where $\{ i,j,k,m,n \} = \{ 1,2,3,4,5\}$. It
acts on $\tilde{H_{\la}}$ as a fixed-point-free involution. Hence the quotient surface 
$\tilde{H_{\la}}/ \left< \iota \right>$ is an Enriques surface. 
\end{rem}
\section{The period domain and the period mapping} 
\subsection{The period domain}
Let $L$ be the K3 lattice $\mathrm{U}^3 \oplus (\mathrm{E}_8)^2 \cong \mathrm{H}^2(\tilde{H_{\la}}, \ZZ)$. 
We fix a sublattice $N \subset L$ generated by $\ell_{ij}$ and $e_{ijk}$ corresponding to $[\tilde{L}_{ij}^{\la}]$ 
and $[E_{ijk}^{\la}]$, and put $M = N^{\perp} = \mathrm{U} \oplus \mathrm{U}(2) \oplus \mathrm{A}_2(2)$. 
If $\phi_{\la} : \mathrm{H}^2(\tilde{H_{\la}}, \ZZ) \rightarrow L$ is an isomorphism such that 
$\phi([\tilde{L}_{ij}^{\la}]) = \ell_{ij}$ and $\phi([E_{ijk}^{\la}]) = e_{ijk}$, we have $\phi_{\la}(M_{\la}) = M$. 
Since $\mathrm{H}^{2,0}(\tilde{H_{\la}}) \subset M_{\la} \otimes \CC$, the $\CC$-linear extension of $\phi_{\la}$
maps $\Omega \in \mathrm{H}^{2,0}(\tilde{H_{\la}})$ into $M \otimes \CC$. By the  Hodge-Riemann relation 
$\Omega \wedge \Omega =0$ and $\Omega \wedge \bar{\Omega} > 0$, 
the period $\phi_{\la} (\Omega)$ belongs to a domain 
\begin{align*}
\DD_M = \{z \in M \otimes \CC \ |
\  {}^tzQz = 0, \ {}^tzQ\bar{z} > 0 \} / \CC^{\times} \subset \PP(M \otimes \CC).
\end{align*}
More explicitly, we have
\begin{align*}
z = [1: z_2: \cdots : z_6] \in \DD_M \Leftrightarrow \begin{cases} 
z_2 = -2 (z_3 z_4 - z_5^2 + z_5 z_6 - z_6^2) \\
y_3 y_4 - y_5^2 + y_5 y_6 - y_6^2 >0 \ \ (y_i = \mathrm{Im} z_i) \end{cases}
\end{align*}
and $\DD_M = \DD_M^+ \coprod \DD_M^-$ where $\DD_M^{\pm} = \{z \in \DD_M \ : \ \pm y_3 > 0 \}$.
\subsection{Integral orthogonal group}
Let us define orthogonal groups 
\begin{align*}
\mathrm{O}(M) = \{ g \in \mathrm{GL}_6(\ZZ) \ | \ {}^tgQg = Q \}, \qquad
\OM = \{ g \in \mathrm{O}(M) \ | \ g(\DD_M^+) = \DD_M^+ \}.
\end{align*}
They act on $\DD_M$ and $\DD_M^+$ respectively. 
Because $\DD_M^+$ is connected, we see that $g \in \mathrm{O}(M)$ belongs to $\OM$ 
if and only if  $g(p_0) \in \DD_M^+$ for a point $p_0 = [\sqrt{2}:\sqrt{2}:\sqrt{-1}:\sqrt{-1}:0:0] \in \DD_M^+$.  
\\
\indent
We define the discriminant form
\[
 q_M : \check{M}/M \longrightarrow \QQ / 2 \ZZ, \quad  x \mapsto {}^txQx,
\]
and the orthogonal group
\[
 \mathrm{O}(q_M) = \{ g \in \mathrm{Aut}(\check{M}/M) \ | \ q_M(gx) = q_M(x) \}.
\]
Let $\OMK$ be the kernel of a natural homomorphism $\OM \rightarrow \mathrm{O}(q_M)$. 
We can lift any $g \in \OMK$ to an automorphism of $L$ acting on $N$ trivially (\cite{Ni}). We consider 
also special orthogonal groups
\[
 \SOM = \OM \cap \SL_6(\ZZ), \quad \SOMK = \OMK \cap \SL_6(\ZZ).
\]
We will show the following Proposition later.
\begin{prop} \label{quotient group}
We have 
\[
 \OM / \OMK \cong \SOM / \SOMK \cong \mathrm{O}(q_T) \cong \{ \pm 1 \} \times \mathrm{S}_5, 
\]
where $\{ \pm 1 \}$ acts on $\DD_M^+ / \OMK$ trivially. 
\end{prop}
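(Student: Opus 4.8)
The plan is to compute the discriminant form $q_M$ of $M = \mathrm{U} \oplus \mathrm{U}(2) \oplus \mathrm{A}_2(2)$ explicitly, identify $\mathrm{O}(q_M)$ with $\{\pm 1\} \times \mathrm{S}_5$, and then establish the two isomorphisms $\OM/\OMK \cong \mathrm{O}(q_M)$ and $\SOM/\SOMK \cong \mathrm{O}(q_M)$ by a surjectivity argument together with a determinant computation. First I would note that $\mathrm{U}$ is unimodular, so $\check M/M \cong \check{\mathrm{U}(2)}/\mathrm{U}(2) \oplus \check{\mathrm{A}_2(2)}/\mathrm{A}_2(2)$; the first summand is $(\ZZ/2)^2$ with the hyperbolic form $\frac{1}{2}\bigl(\begin{smallmatrix} 0 & 1 \\ 1 & 0 \end{smallmatrix}\bigr) \bmod 2\ZZ$, and the second is $(\ZZ/6)^2$, which splits by the Chinese remainder theorem into its $2$-part $\cong \mathrm{A}_2 \otimes \ZZ/2$ and its $3$-part $\cong \mathrm{A}_2^{\vee}/\mathrm{A}_2 \cong \ZZ/3$ with the standard quadratic form. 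So $q_M = q_2 \oplus q_3$ where $q_3$ on $\ZZ/3$ is $x \mapsto \tfrac{4}{3}x^2$ (or the equivalent $\tfrac{2}{3}x^2$, up to normalization), whose orthogonal group is exactly $\{\pm 1\}$, and $q_2$ is a rank-$4$ form over $\ZZ/2$; I would match $q_2$ with the discriminant form of the transcendental lattice $\mathrm{U}\oplus\mathrm{U}(2)\oplus\langle -12\rangle$ appearing in Theorem \ref{th2}, or directly recognize $\mathrm{O}(q_2) \cong \mathrm{S}_5$ via the exceptional isomorphism $\mathrm{S}_5 \cong \mathrm{O}^-_4(\FF_2) \cong \mathrm{Sp}_4(\FF_2)$ acting on the $4$-dimensional $\FF_2$-space with the relevant quadratic form of minus type. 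This also explains the notation $q_T$ in the statement: $\mathrm{O}(q_T) = \mathrm{O}(q_M)$ since $M$ is itself a transcendental lattice.

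Next I would address surjectivity of $\OM \to \mathrm{O}(q_M)$. By Nikulin's results on lattices and discriminant forms (\cite{Ni}), since $M$ is an even lattice of signature $(2,4)$ with $\mathrm{rk}\, M = 6$ which is large relative to the length of its discriminant group, the natural map $\mathrm{O}(M) \to \mathrm{O}(q_M)$ is surjective; I then have to check that one can realize each class in $\OM$, i.e.\ fixing the component $\DD_M^+$. Since $-\mathrm{id} \in \mathrm{O}(M)$ acts trivially on $\DD_M$ and lies in the kernel of $\mathrm{O}(M)\to\mathrm{O}(q_M)$ is false in general — rather $-\mathrm{id}$ acts as $-\mathrm{id}$ on the discriminant group — I would use $-\mathrm{id}$ (or its composition with any lift) to adjust the spinor norm / real component, so that surjectivity of $\mathrm{O}(M)\to\mathrm{O}(q_M)$ descends to surjectivity of $\OM \to \mathrm{O}(q_M)$. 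The kernel of $\OM \to \mathrm{O}(q_M)$ is $\OMK$ by definition, giving $\OM/\OMK \cong \mathrm{O}(q_M)$.

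For the special orthogonal version I would show the inclusion $\SOM \hookrightarrow \OM$ induces an isomorphism on the quotients by $\SOMK$ and $\OMK$ respectively; this amounts to showing $\OM = \SOM \cdot \OMK$, equivalently that $\det$ and the $\mathrm{O}(q_M)$-class are independent — i.e.\ every element of $\OMK$ that has determinant $-1$ can be multiplied by an element of $\SOM \cap \OMK = \SOMK$, while some determinant-$(-1)$ element exists in $\OMK$. Concretely $-\mathrm{id} \notin \OMK$ (it acts nontrivially on $\check M/M$), so I would instead exhibit a reflection in a $(-2)$- or $(-4)$-vector lying in the $\mathrm{U}(2)$ or $\mathrm{A}_2(2)$ part whose action on $q_M$ is trivial but whose determinant is $-1$, giving $\SOMK \subsetneq \OMK$ with index $2$ and likewise $\SOM \subsetneq \OM$ index $2$, so the quotients agree. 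Finally, the assertion that $\{\pm 1\}$ acts trivially on $\DD_M^+/\OMK$ follows because the $\mathrm{O}(q_3) = \{\pm 1\}$ factor is realized by (a lift of) $-\mathrm{id}$ times an element of $\OMK$, and $-\mathrm{id}$ fixes every point of $\DD_M$ pointwise. The main obstacle I expect is the careful bookkeeping in identifying $\mathrm{O}(q_2) \cong \mathrm{S}_5$ and in tracking the spinor norm / real-spinor component to pass from $\mathrm{O}(M)$ to $\OM$ and from there to $\SOM$ without losing or double-counting the $\{\pm 1\}$ factor; the $\mathrm{S}_5$ here should ultimately match the permutation action on the five parameters $\la_0,\dots,\la_4$ of the Sylvester form, which is a useful consistency check.
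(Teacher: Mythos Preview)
Your overall strategy---decompose $q_M = q_2 \oplus q_3$ with $q_3$ on $\ZZ/3\ZZ$ giving $\mathrm{O}(q_3) = \{\pm 1\}$ and $q_2$ a rank-$4$ quadratic form over $\FF_2$ of minus type giving $\mathrm{O}(q_2) \cong \mathrm{O}^-_4(\FF_2) \cong \mathrm{S}_5$, then invoke Nikulin for surjectivity of $\mathrm{O}(M)\to\mathrm{O}(q_M)$---is sound and more conceptual than the paper's route. The paper instead lists the fifteen $2$-torsion elements of $\check M/M$ explicitly, observes that exactly five of them, $v_1,\dots,v_5$, are isotropic for $q_M$, and exhibits concrete matrices $g_1, g_2, u_0, u_1, u_2 \in \OM$ (introduced earlier for other purposes) realizing specific permutations of $\{v_1,\dots,v_5\}$, so that surjectivity onto $\mathrm{S}_5$ is verified by hand; the $\{\pm 1\}$ factor is then identified with $\{\mathrm{id}, -\I_6\}$ exactly as you say. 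Your approach buys generality and avoids the matrix bookkeeping; the paper's buys the explicit $\mathrm{S}_5$-equivariance needed for the period map.

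Two points in your sketch need correction. First, $\mathrm{Sp}_4(\FF_2) \cong \mathrm{S}_6$, not $\mathrm{S}_5$; the isomorphism you want is only $\mathrm{O}^-_4(\FF_2) \cong \mathrm{S}_5$, valid once you check that the $2$-part of $q_M$ has Arf invariant $1$ (the $\mathrm{U}(2)$ summand contributes a hyperbolic plane, the $\mathrm{A}_2(2)$ summand contributes $x^2+xy+y^2$). Second, and more substantively, the determinant-$(-1)$ element of $\OMK$ that you need cannot be found where you propose to look: every vector in $\mathrm{U}(2) \oplus \mathrm{A}_2(2)$ has norm divisible by $4$, and the reflections in such $(-4)$-vectors act nontrivially on $\check M/M$ (for instance the swap $e_3 \leftrightarrow e_4$ interchanges $d_1 = \tfrac12 e_3$ and $d_2 = \tfrac12 e_4$). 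The correct element is the reflection in the $(-2)$-vector $e_1 - e_2$ of the \emph{unimodular} summand $\mathrm{U}$, namely $g_0 = \bigl(\begin{smallmatrix}0&1\\1&0\end{smallmatrix}\bigr) \oplus \I_4$; it lies in $\OMK$ precisely because $\mathrm{U}$ contributes nothing to the discriminant form, it has $\det g_0 = -1$, and it fixes the base point $p_0$ so preserves $\DD_M^+$. This is exactly the element the paper uses to write $\OM = \SOM \cup g_0\,\SOM$ and conclude $\OM/\OMK \cong \SOM/\SOMK$.
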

\begin{rem}
In \cite{DoKe}, it was shown that automorphisms of a certain domain of the positive cone in $N$ is 
$\{ \pm 1 \} \times \mathrm{S}_5$, which can be realized as the subgroup of
$\mathrm{Aut}(N)$ generated by the Enriques involution and the group of symmetries of the
Sylvester pentahedron.
\end{rem}
Let $\OME$ be the kernel of the projection $\OM \rightarrow \OM / \OMK \rightarrow \mathrm{S}_5$, and put 
\[
 \SOME = \OME \cap \SL_6(\ZZ).
\]
Then $\OMK$ (resp. $\SOMK$) is a subgroup of $\OME$ (resp. $\SOME$) of index $2$.
\subsection{Marked Hessian K3 surfaces and the period mapping}
We call $(\tilde{H_{\la}}, \{[\tilde{L}_{ij}^{\la}]\}, \{[E_{ijk}^{\la}]\}, \phi_{\la})$
a Hessian K3 surface with Slvester structure, if $\phi_{\la} : \mathrm{H}^2(\tilde{H_{\la}}, \ZZ) \rightarrow L$ 
is an isomorphism such that $\phi([\tilde{L}_{ij}^{\la}]) = \ell_{ij}$, $\phi([E_{ijk}^{\la}]) = e_{ijk}$ and 
$\phi_{\la}(\Omega_{\la}) \in \DD_M^+$. 
From the Torelli theorem (\cite{PS}) and the above Proposition, we see that
\begin{tm}
The period map
\[
 \{ (\tilde{H_{\la}}, \{[\tilde{L}_{ij}^{\la}]\}, \{[E_{ijk}^{\la}]\}, \phi_{\la}) \} \longrightarrow \DD_M^+, \quad
(\tilde{H_{\la}}, \{[\tilde{L}_{ij}^{\la}]\}, \{[E_{ijk}^{\la}]\}, \phi_{\la}) \mapsto \phi_{\la}(\Omega_{\la})
\]
induces $S_5$-equivariant injective map $\Lambda \rightarrow \DD_M^+ / \OMK \cong \DD_M^+ / \OME$.
\end{tm}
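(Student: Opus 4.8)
The plan is to assemble this from four ingredients: the local Torelli/surjectivity package for K3 surfaces, the description of $N_\la$ and $M_\la$ from the Dolgachev--Keum theorem, Proposition~\ref{quotient group} on the group $\OM/\OMK$, and a count of the families of lines and exceptional curves that the symmetric group $\mathrm{S}_5$ permutes. First I would record that the assignment $\la \mapsto (\tilde H_\la, \{[\tilde L^\la_{ij}]\}, \{[E^\la_{ijk}]\}, \phi_\la)$ is well defined only up to the choice of marking $\phi_\la$; two markings of the same Hessian differ by an element of $\mathrm{O}(\mathrm{H}^2(\tilde H_\la,\ZZ))$ fixing every $\ell_{ij}$ and every $e_{ijk}$, hence fixing $N$ pointwise, hence (by Nikulin, as already invoked in the excerpt) restricting on $M$ to an element of $\OMK$ once we normalize to preserve $\DD_M^+$. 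Therefore the period point $\phi_\la(\Omega_\la)$ is canonically defined in the quotient $\DD_M^+/\OMK$, and the map $\Lambda \to \DD_M^+/\OMK$ is well defined. Since $\OME/\OMK$ is the central $\{\pm1\}$ of Proposition~\ref{quotient group}, which acts trivially on $\DD_M^+/\OMK$, the two quotients $\DD_M^+/\OMK$ and $\DD_M^+/\OME$ coincide, so it suffices to prove injectivity into either one.

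Next I would prove $\mathrm{S}_5$-equivariance. The symmetric group acts on $\Lambda$ by permuting the Sylvester coordinates $[\la_0:\cdots:\la_4]$, and it acts on the abstract lattice $L$ through its action on the index sets of the $\ell_{ij}$ and $e_{ijk}$ (this is exactly the realization of $\mathrm{S}_5 \subset \mathrm{Aut}(N)$ mentioned in the remark after Proposition~\ref{quotient group}), hence on $\DD_M^+$ through $\OM/\OMK \twoheadrightarrow \mathrm{S}_5$. For $w \in \mathrm{S}_5$, permuting the coordinates of $S_\la$ gives an isomorphism $S_\la \cong S_{w\cdot\la}$ inducing an isomorphism $\tilde H_\la \cong \tilde H_{w\cdot\la}$ of the Hessians; under this isomorphism $[\tilde L^\la_{ij}] \mapsto [\tilde L^{w\cdot\la}_{w(i)w(j)}]$ and similarly for the $E_{ijk}$, and $\Omega_\la \mapsto \Omega_{w\cdot\la}$ up to scalar. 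Transporting through the markings, this says precisely that $\phi_{w\cdot\la}(\Omega_{w\cdot\la}) = w\cdot\phi_\la(\Omega_\la)$ in $\DD_M^+/\OMK$, which is the claimed equivariance.

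The heart of the matter is injectivity, and this is where I expect the real work. Suppose $\la, \la' \in \Lambda$ have the same image in $\DD_M^+/\OMK$; then there is $g \in \OMK$ with $g(\phi_\la(\Omega_\la)) = \phi_{\la'}(\Omega_{\la'})$. Because $g \in \OMK$ lifts (by Nikulin) to an isometry $\tilde g$ of $L$ acting trivially on $N$, the composite $\phi_{\la'}^{-1}\circ \tilde g \circ \phi_\la : \mathrm{H}^2(\tilde H_\la,\ZZ) \to \mathrm{H}^2(\tilde H_{\la'},\ZZ)$ is a Hodge isometry sending the classes $[\tilde L^\la_{ij}], [E^\la_{ijk}]$ to $[\tilde L^{\la'}_{ij}], [E^{\la'}_{ijk}]$. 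One then checks it is \emph{effective}, i.e.\ sends the K\"ahler cone to the K\"ahler cone, after composing if necessary with elements of the Weyl group generated by $(-2)$-curves — here one uses that the ample cone is cut out inside the positive cone by the $(-2)$-curves, that these curves lie in $N_\la$ (for general $\la$, by Dolgachev--Keum $\mathrm{NS} = N_\la$), and that $\tilde g$ already matches up a consistent set of generators of $N_\la$, so no further adjustment is needed, or at worst an adjustment lying in the kernel already accounted for. By the strong Torelli theorem for K3 surfaces (\cite{PS}), this effective Hodge isometry is induced by an isomorphism $f : \tilde H_{\la'} \xrightarrow{\sim} \tilde H_\la$. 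Now I descend $f$ to the cubic surfaces: $f$ carries the ten exceptional divisors $E^{\la'}_{ijk}$ to the $E^\la_{ijk}$ and the ten lines $\tilde L^{\la'}_{ij}$ to the $\tilde L^\la_{ij}$, so blowing down recovers an isomorphism of the singular Hessian quartics $H_{\la'} \cong H_\la$ compatible with the incidence structure of nodes and lines, which is the combinatorial data of the Sylvester pentahedron; this forces $S_{\la'} \cong S_\la$ as embedded cubic surfaces, and by the uniqueness of the Sylvester form (stated in the excerpt: $\la$ is determined up to permutation) we get $\la' = w\cdot\la$ for some $w\in\mathrm{S}_5$. Thus the map is injective after passing to $\mathrm{S}_5$-orbits; combined with the equivariance, the induced map on $\Lambda$ itself into $\DD_M^+/\OMK$ is injective. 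The main obstacle is the effectivity/Weyl-chamber bookkeeping in the application of Torelli — making sure that matching the marking data $\ell_{ij}, e_{ijk}$ really does force the Hodge isometry into the correct K\"ahler chamber — and, relatedly, handling the non-general $\la$ where $\mathrm{NS}(\tilde H_\la)$ strictly contains $N_\la$; for those one argues by a specialization/Zariski-density argument from the general case, or directly checks that the extra classes do not create new $(-2)$-curves obstructing effectivity.
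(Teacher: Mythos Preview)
Your proof is essentially what the paper intends: the paper gives no detailed argument beyond the sentence ``From the Torelli theorem and the above Proposition, we see that\ldots'', and your unpacking of well-definedness, $\mathrm{S}_5$-equivariance, and Torelli-based injectivity is the standard and correct route.

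There is, however, a slip at the very end of your injectivity argument. You correctly observe that the Hodge isometry $\phi_{\la'}^{-1}\circ\tilde g\circ\phi_\la$ sends $[\tilde L^\la_{ij}]$ to $[\tilde L^{\la'}_{ij}]$ and $[E^\la_{ijk}]$ to $[E^{\la'}_{ijk}]$ \emph{with the same indices}, because $\tilde g$ acts trivially on $N$. Hence the isomorphism $f:\tilde H_{\la'}\to\tilde H_\la$ produced by Torelli matches the \emph{labelled} pentahedral data, not merely the unlabelled incidence structure; in particular it carries the hyperplane class (which lies in $N$) to the hyperplane class, so it descends to a projective isomorphism $H_{\la'}\cong H_\la$ identifying the face $\{X_i=0\}$ with $\{X_i=0\}$ for each $i$. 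From this one concludes $\la'=\la$ directly, not merely $\la'=w\cdot\la$ for some $w\in\mathrm{S}_5$. Your final deduction --- that injectivity on $\mathrm{S}_5$-orbits together with equivariance yields injectivity of $\Lambda\to\DD_M^+/\OMK$ --- is not valid as a general principle (an equivariant map can be injective on orbit spaces without being injective on points), so you should drop the detour through $w$ and conclude $\la=\la'$ straight from the label-preserving isomorphism you have already produced.
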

\begin{rem}
The period domain $\DD_{Enr}$ of Enriques surfaces is the domain of type IV defined by 
the lattice $L_{Enr} = \mathrm{U} \oplus \mathrm{U}(2) \oplus \mathrm{E}_8(2)$. We have a primitive embedding 
$M \subset L_{Enr}$ and  $\DD_M^+ \subset \DD_{Enr}$. Automorphic forms on $\DD_{Enr}$ are given in \cite{K2} and 
\cite{FS}.
\end{rem}
\section{Two Arithmetic quotients}
We give an explicit isomorphism between the period domain $\DD_M^+$ and the Hermitian upper half 
space of degree $2$, and compare action of two discrete groups.
\subsection{Hermitian upper half space}
The Hermitian upper half space of degree $2$ is defined by
\[
 \HH_2 = \{ \tau \in \GL_2(\CC) \ | \ \frac{1}{2i}(\tau - \tau^*) > 0  \}.
\]
We have an isomorphism
\[
\Psi :  \DD_M^+ \longrightarrow \HH_2, \qquad [1:z_2:\cdots:z_6] \mapsto
\begin{bmatrix} z_3 & z_5 + \w z_6 \\ z_5 + \w^2 z_6 & z_4 \end{bmatrix}
\]
where $\w = e^{2 \pi i /3}$. Note that $z_2 = -2 \det \Psi(z)$. 
The modular group 
\[
 \HG = \{ g \in \GL_4(\ZZ[\w]) \ | \ g^* J g = J \}, \quad J = \begin{bmatrix} 0 & \I_2 \\ -\I_2 & 0 \end{bmatrix}
\]
acts on $\HH_2$ by $\begin{bmatrix} A & B \\ C & D \end{bmatrix} \cdot \tau = (A \tau +B)(C \tau +D)^{-1}$, and
we have an involution $T : \HH_2 \rightarrow \HH_2, \ \tau \mapsto {}^t\tau$.
\begin{rem}
For $g \in \HG$, we have $T \cdot g \cdot \tau = \bar{g} \cdot T \cdot \tau$.
\end{rem}
We consider the following congruence subgroups
\begin{align*}
\HG_0(2) = \{ \begin{bmatrix} A & B \\ C & D \end{bmatrix} \in \HG \ | \ C \equiv 0 \mod 2 \}, \quad
\HG_1(2) = \{ \begin{bmatrix} A & B \\ C & D \end{bmatrix} \in \HG_0(2) \ | \ A \equiv \I_2 \mod 2 \}. 
\end{align*}
To study generators of $\HG_0(2)$ and $\HG_1(2)$, let us define a group
\[
 G(2) = \{ g \in \GL_2(\ZZ[\w]) \ | \ g \equiv \I_2 \mod 2\}.
\]
\begin{lem} \label{division-lemma} If $\begin{bmatrix} \alpha \\ \beta \end{bmatrix} \in \ZZ[\w]^2$ satisfys 
$\begin{bmatrix} \alpha \\ \beta \end{bmatrix} \equiv \begin{bmatrix} 1 \\ 0 \end{bmatrix} \mod 2$, then 
there exists $A \in G(2)$ such that 
\[
 A \begin{bmatrix} \alpha \\ \beta \end{bmatrix} = \begin{bmatrix} \delta \\ 0 \end{bmatrix} 
\quad (\delta \in \ZZ[\w]).
\]
\end{lem}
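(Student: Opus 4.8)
The plan is to mimic the classical Euclidean-algorithm proof that $\SL_2(\ZZ)$ acts transitively on primitive vectors, but carried out inside the principal congruence subgroup $G(2)$ of level $2$ over the Euclidean ring $\ZZ[\w]$. The two elementary matrices one is allowed to use are the level-$2$ transvections
\[
T_{12}(t) = \begin{bmatrix} 1 & 2t \\ 0 & 1 \end{bmatrix}, \qquad
T_{21}(t) = \begin{bmatrix} 1 & 0 \\ 2t & 1 \end{bmatrix} \qquad (t \in \ZZ[\w]),
\]
both of which lie in $G(2)$; one should also note that $\mathrm{diag}(u, u^{-1})$ with $u \equiv 1 \bmod 2$ and the matrix $\begin{bmatrix} 1 & 0 \\ 0 & 1\end{bmatrix}$-type swaps are not available, so the argument must reduce $\beta$ to $0$ using only $T_{12}, T_{21}$ and must finish by adjusting $\alpha$ in its residue class.

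\textbf{Key steps.}
First I would set up the norm $N : \ZZ[\w] \to \ZZ_{\geq 0}$ (which makes $\ZZ[\w]$ Euclidean) and argue by induction on $N(\beta)$. If $\beta = 0$ we are already done with $\delta = \alpha$. If $\beta \neq 0$, note $\beta \equiv 0 \bmod 2$, so $2 \mid \beta$; write $\beta = 2\beta'$. Since $\ZZ[\w]$ is Euclidean, choose $q, r \in \ZZ[\w]$ with $\alpha = q\beta' + r$... — but one must be careful that the division step is legal inside $G(2)$: applying $T_{21}(q)$ replaces $\begin{bmatrix}\alpha\\\beta\end{bmatrix}$ by $\begin{bmatrix}\alpha\\\beta + 2q\alpha\end{bmatrix}$ and applying $T_{12}(q)$ replaces it by $\begin{bmatrix}\alpha + 2q\beta\\\beta\end{bmatrix}$. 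So the correct reduction is: because $\alpha$ is a \emph{unit} mod $2$ while $\beta$ is $0$ mod $2$, divide $\beta$ (not $\alpha$) by $2\alpha$ — i.e. find $q$ with $N(\beta + 2q\alpha) < N(2\alpha) \le$ something controllable, using $\gcd(\alpha, 2) = 1$ to guarantee the pair $(\alpha, \beta + 2q\alpha)$ can be driven down. Iterating, one alternately applies $T_{21}(q)$ to shrink the lower entry modulo $2\alpha$ and, if needed, $T_{12}(q')$ to shrink the upper entry, and the usual Euclidean descent on $N(\alpha) + N(\beta)$ terminates; crucially every intermediate matrix is a product of $T_{12}(\cdot), T_{21}(\cdot) \in G(2)$, hence lies in $G(2)$, and the congruence $\equiv \begin{bmatrix}1\\0\end{bmatrix} \bmod 2$ is preserved at every stage since the transvections are $\equiv \I_2 \bmod 2$. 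When the lower entry becomes $0$ we read off $\delta = $ the resulting upper entry, and $\delta \equiv 1 \bmod 2$ automatically.

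\textbf{Main obstacle.}
The delicate point is the very first division: over $\ZZ[\w]$ one cannot in general divide $\alpha$ by $\beta$ and land inside $G(2)$, because the only moves available multiply the off-diagonal correction by $2$. The resolution is to exploit the asymmetry of the hypothesis — $\alpha$ is a unit modulo $2$, $\beta$ is divisible by $2$ — which means $2\alpha$ and $\beta$ are "comparable" after one factor of $2$ is extracted, and the Euclidean function of $\ZZ[\w]$ (covering radius argument for the hexagonal lattice: every element is within norm $< 1$ of a multiple of any nonzero element) lets one decrease $N$ at each $T_{21}$ step. So the real content is checking that the Euclidean descent can be run with the \emph{constrained} elementary operations; once that is in place the induction is routine, and I would expect the write-up to consist of one clean inductive paragraph together with the explicit remark that $T_{12}(t), T_{21}(t), \mathrm{diag}(u,u^{-1}) \in G(2)$ for $u \in \ZZ[\w]^\times$ with $u \equiv 1 \bmod 2$ (the last being needed only to tidy $\delta$ if one wants it normalized, which the statement does not require).
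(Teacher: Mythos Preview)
Your descent has a real gap. Writing $\beta = 2\gamma$, the two moves you allow become $\gamma \mapsto \gamma + q\alpha$ and $\alpha \mapsto \alpha + 4q\gamma$ (note the factor $4$, not $2$, since $\beta$ is already even). The hexagonal covering–radius bound gives at best $N(\gamma') \le \tfrac{1}{3}N(\alpha)$ after the first move and $N(\alpha') \le \tfrac{16}{3}N(\gamma')$ after the second, so a round of alternation only yields $N(\alpha') \le \tfrac{16}{9}N(\alpha)$, which is no contraction. Thus ``the usual Euclidean descent on $N(\alpha)+N(\beta)$ terminates'' is not justified; the asymmetry you invoke (odd $\alpha$, even $\beta$) is exactly what produces the bad factor $4$ rather than rescuing you from it. The argument can be repaired, but not with the naive alternating scheme you sketch.

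The paper sidesteps all of this with a one–line construction. Since $\ZZ[\w]$ is a PID, take $\delta$ generating $(\alpha,\beta)$, set $\alpha'=\alpha/\delta$, $\beta'=\beta/\delta$, and (after replacing $\delta$ by $\w\delta$ or $\w^2\delta$ if necessary) arrange $\alpha'\equiv 1\pmod 2$; then automatically $\delta\equiv 1$ and $\beta'\equiv 0\pmod 2$. Choose $x,y$ with $\alpha'x+\beta'y=1$ and put
\[
A=\begin{bmatrix} x-\beta'y & y+\alpha'y \\ -\beta' & \alpha' \end{bmatrix}.
\]
One checks directly that $\det A=1$, $A\equiv \I_2\pmod 2$, and $A\begin{bmatrix}\alpha\\\beta\end{bmatrix}=\begin{bmatrix}\delta\\0\end{bmatrix}$. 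No iteration, no termination issue.
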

\begin{proof}
Let $\delta$ be a generator of an ideal $(\alpha, \beta)$ of $\ZZ[\w]$, and put 
$\alpha' = \alpha / \delta$ and $\beta' = \beta / \delta$. We may assume that $\alpha' \equiv 1 \mod 2$ by replacing 
$\delta$ with $\w \delta$ or $\w^2 \delta$ if necessary. We see that 
$A = \begin{bmatrix} x - \beta' y & y + \alpha' y \\ -\beta' & \alpha' \end{bmatrix}$ is 
a desired matrix for $x, y \in \ZZ[\w]$ such that $\alpha' x + \beta' y = 1$.
\end{proof}
\begin{prop} \label{unitary-prop}
{\rm (1)} The group $\HG_1(2)$ is generated by 
\[
 g(A) = \begin{bmatrix} A & 0 \\ 0 & {}^t \bar{A}^{-1} \end{bmatrix}, \quad
g(B)^* = \begin{bmatrix} \I_2 & B \\ 0 & \I_2 \end{bmatrix}, \quad
g(B)_* = \begin{bmatrix} \I_2 & 0 \\ 2B & \I_2 \end{bmatrix}
\] 
where $A \in G(2)$ and $B = \begin{bmatrix} m_1 & m_3 + \w m_4 \\ m_3 + \w^2 m_4 & m_2 \end{bmatrix}$ 
$(m_1, \cdots, m_4 \in \ZZ)$. 
\\
{\rm (2)} We have an exact sequence
\[
1 \longrightarrow \HG_1(2) \longrightarrow \HG_0(2) \overset{f}{\longrightarrow} 
\GL_2(\FF_4) \longrightarrow 1, \quad f(\begin{bmatrix} A & B \\ C & D \end{bmatrix}) = A \mod 2
\]
and the group $\HG_0(2)$ is generated by $\HG_1(2)$ and $g(A)$ with $A \in \GL_2(\ZZ[\w])$.
\end{prop}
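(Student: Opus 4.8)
The plan is to treat part (1) by a standard reduction argument on the ``C-block'' of an element $g = \begin{bmatrix} A & B \\ C & D \end{bmatrix} \in \HG_1(2)$, using Lemma \ref{division-lemma} to clear the first column of $C$ by left-multiplication by a generator $g(A')$, then the first row of $C$ by right-multiplication, iterating until $C = 0$; then one handles the block-triangular remainder. First I would record the basic relations among the proposed generators: $g(A_1)g(A_2) = g(A_1 A_2)$, so the $g(A)$ with $A \in G(2)$ form a copy of $G(2)$; the $g(B)^*$ multiply additively in $B$, as do the $g(B)_*$; and conjugation $g(A)\,g(B)^*\,g(A)^{-1} = g(ABA^*)^*$ (similarly for $g(B)_*$), which shows the subgroup they generate is stable under the $G(2)$-action. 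I would also note that any element of $\HG_1(2)$ has $A \equiv \I_2 \bmod 2$ and, since $g^*Jg = J$ forces $A^*C = C^*A$ (so $A^*C$ is Hermitian) and $C \equiv 0 \bmod 2$, the relevant entries lie in the right congruence classes for the lemma to apply after factoring out a unit.

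The core induction runs as follows. Given $g \in \HG_1(2)$ with $C \ne 0$, pick a nonzero entry; after permuting rows/columns by suitable $g(A)$'s (permutation-type matrices lie in $G(2)$ only up to sign, so one must be slightly careful, but diag$(-1,-1) \in G(2)$ handles the signs) one can arrange the first column of $C$ to be, up to the global factor $2$, a vector $\equiv \begin{bmatrix}1\\0\end{bmatrix} \bmod 2$ once we divide by an appropriate generator — here I would use that $A^*C$ Hermitian together with $A \equiv \I_2$ pins down the parity. Apply Lemma \ref{division-lemma} to kill the lower entry of that column via left multiplication by some $g(A')$, $A' \in G(2)$; repeat on rows; since each step strictly decreases, say, the ideal-theoretic ``size'' (or the number of nonzero entries together with a norm), after finitely many steps $C = 0$. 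With $C = 0$ the symplectic condition gives $D = {}^t\bar A^{-1}$ and $B$ such that $A^{-1}B$ is Hermitian; writing $g = g(A)\,g(B')^*$ with $B' = A^{-1}B$ of the stated shape (the parametrization $B = \begin{bmatrix} m_1 & m_3 + \w m_4 \\ m_3 + \w^2 m_4 & m_2\end{bmatrix}$ is exactly the general $2\times2$ Hermitian matrix over $\ZZ[\w]$), and $A \in G(2)$ by reduction mod $2$, this completes (1). The case where the reduction first produces $C$ invertible is handled by one more left multiplication by $g(B)_*$ (which modifies $A$ by $2BC$, i.e.\ trivially mod $2$) to return to the $C=0$ situation.

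For part (2), surjectivity of $f$ is immediate from the generators $g(A)$, $A \in \GL_2(\ZZ[\w])$, since reduction $\ZZ[\w] \to \FF_4$ induces $\GL_2(\ZZ[\w]) \twoheadrightarrow \GL_2(\FF_4)$; the kernel of $f$ consists of those $\begin{bmatrix} A & B \\ C & D\end{bmatrix} \in \HG_0(2)$ with $A \equiv \I_2 \bmod 2$, which is precisely $\HG_1(2)$ by definition, giving exactness. The final assertion — that $\HG_0(2)$ is generated by $\HG_1(2)$ together with the $g(A)$, $A \in \GL_2(\ZZ[\w])$ — follows formally: given any $h \in \HG_0(2)$, choose $A \in \GL_2(\ZZ[\w])$ with $f(g(A)) = f(h)$, so $g(A)^{-1}h \in \ker f = \HG_1(2)$.

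I expect the main obstacle to be the bookkeeping in the induction of part (1): making precise the ``complexity'' that decreases under the alternating left/right reductions (one natural choice is the pair (number of nonzero entries of $C$, norm of a chosen entry) with lexicographic order), and tracking that each $g(A')$ produced by Lemma \ref{division-lemma} really has $A' \in G(2)$ and does not disturb the congruence $A \equiv \I_2 \bmod 2$ needed to keep applying the lemma at the next stage. The symplectic relations $A^*C = C^*A$, $A^*D - C^*B = \I_2$ are what guarantee the parities stay correct, and verifying this at each step is the part that requires genuine care rather than routine computation.
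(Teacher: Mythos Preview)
Your treatment of part (2) is correct and matches the paper's (which just says ``(2) is easily shown''). Part (1), however, follows a different route from the paper and has a real gap.

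The paper does \emph{not} reduce the $C$-block. It reduces the first \emph{column} $x={}^t(x_1,x_2,x_3,x_4)$ of the whole $4\times 4$ matrix. Lemma \ref{division-lemma} is applied to the top two entries $(x_1,x_2)$, which come from the $A$-block and hence are $\equiv(1,0)\bmod 2$; this kills $x_2$. Then the isotropy relation ${}^t\bar{x}Jx=0$ (the $(1,1)$-entry of $g^*Jg=J$) forces $\bar{x_1}x_3\in\RR$, so $x_1$ and $x_3$ are $\QQ$-proportional. Writing $x=(m\alpha,0,2n\alpha,2\beta)$ with $m$ odd, a Euclidean algorithm on $(m,n)$ via $g(B)^*,g(B)_*$ with $B=\mathrm{diag}(\pm1,0)$ kills $n$. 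Finally a norm-decreasing step (choosing a unit $\e\in\ZZ[\w]^\times$ so that the angle of $\bar\beta\e\alpha$ lies in $[-\pi/6,\pi/6]$, and using the inequality $|\beta-\e\alpha|^2<|\beta|^2$ when $|\alpha|<\sqrt{3}|\beta|$, and symmetrically) kills $\beta$. One then repeats on the second column.

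Your plan to attack $C$ directly runs into several concrete obstructions. First, the columns of $C/2$ are arbitrary vectors in $\ZZ[\w]^2$; the condition $A^*C=C^*A$ with $A\equiv \I_2$ only yields $C/2\equiv (C/2)^*\bmod 2$, which does not force any column into the shape $\equiv{}^t(1,0)\bmod 2$ that Lemma \ref{division-lemma} requires. Second, your remark that permutation matrices lie in $G(2)$ ``up to sign'' is false: $\begin{bmatrix}0&1\\1&0\end{bmatrix}\not\equiv \I_2\bmod 2$ and no scalar repairs this, so you cannot freely permute rows or columns within the proposed generating set. Third, even if you managed to put $C/2$ in, say, diagonal form by left/right $G(2)$-moves, nothing in your outline explains how to annihilate the remaining diagonal entries; the crucial input you are missing is precisely the isotropy relation linking the $A$- and $C$-block entries of a single column, which is what makes the paper's reduction terminate.

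In short: keep your argument for (2), but for (1) switch to reducing columns of $g$ rather than the $C$-block, invoke ${}^t\bar{x}Jx=0$ after the first application of Lemma \ref{division-lemma}, and then carry out the two-stage Euclidean/norm reduction as in the paper.
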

\begin{proof}
(1) Let $\Gamma$ be the group generatd by matrices in the Proposition, and 
$g = \begin{bmatrix} A & B \\ C & D \end{bmatrix}$ be in $\HG_1(2)$. If $C = 0$, then we see easily that $g \in \Gamma$. 
So we show that there exists $g' \in \Gamma$ such that $g' g$ is a matrix with $C = 0$. This is proved by 
the following division algorithm.
\\ \indent
(i) By Lemma \ref{division-lemma}, there exist $A \in G(2)$ such that 
the first column of $g(A) g$ is $x = {}^t(\alpha, 0, \gamma, \delta)$. Then we have $\alpha \in \QQ \gamma$
by the unitary condition ${}^t x J \bar{x} = 0$. Therefore we may assume that the first column of 
$g$ is ${}^t(m \alpha,\ 0,\ 2n \alpha,\ 2\beta)$ with $\alpha, \beta \in \ZZ[\w]$, $m, n \in \ZZ$ and 
$m \equiv 1 \mod 2$. Multiplying $g(B)^*$ and $g(B)_*$ with $B = \begin{bmatrix} \pm 1 & 0 \\ 0 & 0 \end{bmatrix}$, 
we have transformations of the first column
\begin{align*}
g(B)^* : (m \alpha,\ 0,\ 2n \alpha,\ 2\beta) &\mapsto ((m \pm 2n) \alpha,\ 0,\ 2n \alpha,\ 2\beta), \\
g(B)_* :(m \alpha,\ 0,\ 2n \alpha,\ 2\beta) &\mapsto (m \alpha,\ 0,\ 2(n \pm m) \alpha,\ 2\beta).
\end{align*}
By these transformaions, we can change the value of $n$ into $0$. 
\\ \indent
(ii) Let $(\alpha,0,0,2\beta)$ be the first column of $g$.
\\
(ii-1) If $|\alpha| < \sqrt{3} |\beta|$, then let us consider a transformation 
\[
 g(B)_* : 
(\alpha,0,0,2\beta) \mapsto (\alpha, 0, 0, 2(\beta -\e \alpha)), \qquad
B = \begin{bmatrix} 0 & -\bar{\e} \\ -\e & 0 \end{bmatrix}
\]
for $\e \in \ZZ[\w]^{\times}$ such that  
$-\frac{\pi}{6} \leq \mathrm{arg}(\bar{\beta} \e \alpha) \leq \frac{\pi}{6}$. Then we have
\begin{align*}
|\beta - \e \alpha|^2 = |\beta|^2 - 2 \RE (\bar{\beta} \e \alpha) + |\alpha|^2 
&\leq |\beta|^2 -2 \cos \frac{\pi}{6} |\beta| |\alpha| + |\alpha|^2 \\
&= |\beta|^2 - (\sqrt{3} |\beta| - |\alpha|)|\alpha|  < |\beta|^2.
\end{align*}
(ii-2) If $\sqrt{3} |\beta| \leq |\alpha|$, then let us consider a transformation
\[
 g(B)^* : 
(\alpha, 0, 0, 2\beta) \mapsto (\alpha - \e \beta, 0, 0, 2\beta), \qquad
B = \begin{bmatrix} 0 & -\e \\ -\bar{\e} & 0 \end{bmatrix}
\]
for $\e \in \ZZ[\w]^{\times}$ such that 
$-\frac{\pi}{6} \leq \mathrm{arg}(\bar{\alpha} \e \beta) \leq \frac{\pi}{6}$. Then we have
\begin{align*}
|\alpha - 2 \e \beta|^2 = |\alpha|^2 - 4 \RE (\bar{\alpha} \e \beta) + 4 |\beta|^2 
&< |\alpha|^2 -4 \cos \frac{\pi}{6} |\alpha| |\beta| + 6 |\beta|^2 \\
&= |\alpha|^2 - 2\sqrt{3}(|\alpha| - \sqrt{3}|\beta|)|\beta| < |\alpha|^2.
\end{align*} 
Repeating the above transformations, we may assume that $\beta = 0$.
\\ \indent
(iii) Let $x_1 = {}^t(\e, 0, 0, 0)$ be the first column of $g$, $x_2 = {}^t(\alpha, \beta, \gamma, \delta)$ 
be the second column. Then we have $\gamma = 0$ and $\beta \in \QQ \delta$ since ${}^tx_1J \bar{x_2} = 0$ and 
${}^tx_2J \bar{x_2} = 0$. Applying the same argument with (i), we can change $\delta$ into $0$.
\\
(2) is easily shown.
\end{proof}
\begin{rem}
We have an isomorphism $\GL_2(\FF_4) / \FF_4^{\times} \cong \mathrm{A}_5$ as 
even permutations of five points of $\PP^1(\FF_4)$.
\end{rem}
\subsection{Comparison of two groups}
We compare $\OM$ and $\HG$ as automorphisms of $\DD_M^+ \cong \HH_2$. 
\begin{lem} \label{translations}
Let $h \in \GL_6(\ZZ)$ be a matrix of the following form
\begin{align*}
h = \begin{bmatrix} a_{11} & 0 & 0 & 0 & 0 & 0\\
a_{21} & a_{22} & a_{23} & a_{24} & a_{25} & a_{26}\\ 
a_{31} & 0 & 1 & 0 & 0 & 0\\
a_{41} & 0 & 0 & 1 & 0 & 0\\
a_{51} & 0 & 0 & 0 & 1 & 0\\
a_{61} & 0 & 0 & 0 & 0 & 1\end{bmatrix}.
\end{align*}
We have $h \in \OM$ if and only if the conditions
\begin{itemize}
\item [{\rm (i)}] $a_{11} = a_{22} = 1$
\item [{\rm (ii)}] $a_{21} = -\frac{1}{2}{}^ta Q' a, \quad 
a = {}^t(a_{31},a_{41},a_{51},a_{61}), \quad Q' = \begin{bmatrix} 0 & 2 \\ 2 & 0 \end{bmatrix}
\oplus \begin{bmatrix} -4 & 2 \\ 2 & -4 \end{bmatrix}$
\item [{\rm (iii)}] $(a_{23}, a_{24},a_{25},a_{26}) = - {}^ta Q'$
\end{itemize}
are satisfied. Therefore any $(a_{31},a_{41},a_{51},a_{61}) \in \ZZ^4$ 
determines $h = h(a_{31}, \cdots, a_{61}) \in \OM$ 
by the relations {\rm (i)} - {\rm (iii)}. Moreover, we have 
\begin{align*}
h(a_{31}, \cdots, a_{61})h(b_{31}, \cdots ,b_{61}) = 
h(a_{31} + b_{31},\cdots,a_{61} + b_{61}).
\end{align*}
Hence they form a subgroup of $\OM$, which is isomorphic to $\ZZ^4$. As an automorphism of $\HH_2$, we have
\[
  \Psi(h(m_1,m_2,m_3,m_4) \cdot z) = g(B)^* \cdot \Psi(z), \quad
B = \begin{bmatrix} m_1 & m_3 + \w m_4 \\ m_3 + \w^2 m_4 & m_2 \end{bmatrix}.
\]
\end{lem}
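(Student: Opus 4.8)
The plan is to check membership in $\OM$ directly, separating the integral isometry condition ${}^thQh=Q$ from the half--cone condition $h(\DD_M^+)=\DD_M^+$, and then to read off the group law and the $\HH_2$--action as corollaries. First I would write $v_1,\dots,v_6$ for the columns of $h$ and put $\langle x,y\rangle={}^txQy$, so that ${}^thQh=Q$ is equivalent to $\langle v_i,v_j\rangle=Q_{ij}$ for all $i,j$. Because $v_3,\dots,v_6$ differ from the standard basis vectors $e_3,\dots,e_6$ only by multiples of $e_2$, and $\langle e_2,e_k\rangle=0$ for all $k\ge2$, all relations with $i,j\in\{2,\dots,6\}$ hold automatically, and only the pairings against $v_1$ matter. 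A short computation should give $\langle v_1,v_2\rangle=a_{11}a_{22}$, $\langle v_1,v_1\rangle=2a_{11}a_{21}+{}^taQ'a$, and $\langle v_1,v_j\rangle=a_{11}a_{2j}+(Q'a)_{j-2}$ for $j=3,\dots,6$, with $a={}^t(a_{31},a_{41},a_{51},a_{61})$; setting these equal to $1,0,0$ and using integrality of $a_{11},a_{22}$ forces $a_{11}=a_{22}=\pm1$ and then (ii), (iii) up to the common factor $a_{11}$. Conversely, I would note ${}^taQ'a=4(a_{31}a_{41}-a_{51}^2+a_{51}a_{61}-a_{61}^2)\in4\ZZ$ and that ${}^taQ'$ is integral, so (i)--(iii) define an integral matrix with $\det h=a_{11}a_{22}=1$ (expand along column $2$, then column $1$); hence every $a\in\ZZ^4$ gives an element of $\mathrm{O}(M)$.

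Next I would pin down the sign of $a_{11}$. Since $h$ acts linearly on $M\otimes\CC$ and sends ${}^t(1,z_2,\dots,z_6)$ to a vector with first entry $a_{11}$ and third entry $a_{31}+z_3$, the third coordinate of $h\cdot z$ after renormalizing to first coordinate $1$ is $(a_{31}+z_3)/a_{11}$, whose imaginary part is $y_3/a_{11}$. As $\mathrm{O}(M)$ preserves $\DD_M=\DD_M^+\sqcup\DD_M^-$ and $a_{11}=\pm1$, this shows $h(\DD_M^+)=\DD_M^+$ exactly when $a_{11}=1$. Combining the two steps yields the equivalence of $h\in\OM$ with (i)--(iii), and in particular that each $a\in\ZZ^4$ determines $h(a)=h(a_{31},\dots,a_{61})\in\OM$.

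Finally I would multiply $h(a)h(b)$ column by column: in the first column the bottom four rows add to $a_{j1}+b_{j1}$, and the second--row entry is $a_{21}+a_{22}b_{21}+\sum_{j=3}^6a_{2j}b_{j1}$, which by (ii), (iii) and symmetry of $Q'$ equals $-\tfrac12{}^t(a+b)Q'(a+b)$; columns $2$--$6$ keep the required shape since $a_{22}=1$. This gives $h(a)h(b)=h(a+b)$, hence the subgroup statement and the fact that $a\mapsto h(a)$ is an isomorphism onto it (injectivity being read off from column $1$). For the last assertion, the coordinate description shows $h(m_1,m_2,m_3,m_4)$ acts by $(z_3,z_4,z_5,z_6)\mapsto(z_3+m_1,z_4+m_2,z_5+m_3,z_6+m_4)$ while fixing the normalization $z_1=1$; applying $\Psi$ and using $\bar\w=\w^2$ gives $\Psi(h(m_1,m_2,m_3,m_4)\cdot z)=\Psi(z)+B$ for $B$ the Hermitian matrix in the statement, and $g(B)^*$, whose lower--left block is $0$ and upper--left block $\I_2$, acts on $\HH_2$ by $\tau\mapsto\tau+B$, so the two coincide.

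I expect the main obstacle to be purely bookkeeping in the first step: computing all the pairings $\langle v_i,v_j\rangle$ without error and verifying the divisibilities that make the formulas for $a_{21}$ and $(a_{23},\dots,a_{26})$ in (ii), (iii) land in $\ZZ$. After that, fixing $a_{11}=1$ from the $\DD_M^+$ condition and the remaining two verifications are routine linear algebra.
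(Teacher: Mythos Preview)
Your proposal is correct and follows essentially the same approach as the paper: the paper's own proof is a two--line sketch (``Since $\det h=\pm1$ and $h(p_0)\in\DD_M^+$, we see that $a_{11}=a_{22}=1$. Other conditions are obtained from the condition ${}^thQh=Q$''), and you have filled in exactly these computations, together with the group law and the translation action on $\HH_2$ that the paper leaves implicit. The only cosmetic difference is that the paper fixes the sign of $a_{11}$ by evaluating at the specific test point $p_0=[\sqrt{2}:\sqrt{2}:\sqrt{-1}:\sqrt{-1}:0:0]$, whereas you read off $\mathrm{Im}\,z_3/a_{11}$ at a general point; both arguments are equivalent.
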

\begin{proof}
Since $\det h = \pm1$ and $h(p_0) \in \DD_M^+$, 
we see that $a_{11} = a_{22} =1$. Other conditions are obtained from the condition ${}^thQh = Q$.\\
\end{proof}
\begin{lem} \label{uni}
{\rm (1)} A map $\psi : \GL_2(\ZZ[\w]) \rightarrow \SOM$,
\[ 
\begin{bmatrix} a_1 & a_2 \\ a_3 & a_4 \end{bmatrix} \mapsto 
\I_2 \oplus \frac{2}{\sqrt{3}} \begin{bmatrix} 
\sqrt{3} |a_1|^2 /2 & \sqrt{3} |a_2|^2 /2 & \sqrt{3} \RE(a_1 \overline{a_2}) & \sqrt{3} \RE(\w a_1 \overline{a_2}) \\
\sqrt{3} |a_3|^2 /2 & \sqrt{3} |a_4|^2 /2 & \sqrt{3} \RE(a_3 \overline{a_4}) & \sqrt{3} \RE(\w a_3 \overline{a_4}) \\ 
\IM (\w a_3 \overline{a_1})  & \IM (\w a_4 \overline{a_2}) & \IM (\w (a_4 \overline{a_1} + a_3 \overline{a_2}) ) & 
\IM (a_4 \overline{a_1} - \w a_2 \overline{a_3} ) \\ 
\IM (a_1 \overline{a_3}) & \IM (a_2 \overline{a_4}) & \IM (a_1 \overline{a_4} + a_2 \overline{a_3}) &
\IM (\w (a_1 \overline{a_4} - a_3 \overline{a_2})) 
\end{bmatrix} 
\]
is a homomorphism such that $\mathrm{Ker} \ \psi = \{ \pm1,\ \pm \w,\ \pm \w^2\}$ and 
$\Psi(\psi(A) \cdot z) = g(A) \cdot \Psi(z)$. Moreover we have $\psi(g) \equiv \I_6 \mod 2$ if and only if $g \in G(2)$.
\\
{\rm (2)} For $u_1 = \I_4 \oplus \begin{bmatrix} 1 & -1 \\ 0 & -1 \end{bmatrix} \in \OM$, we have
$\Psi(u_1 \cdot z) = {}^t \Psi(z) = T \cdot \Psi(z)$.
\end{lem}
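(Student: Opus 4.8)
The plan is to verify Lemma \ref{uni} by direct computation, treating parts (1) and (2) separately. For part (1), the main work is checking that $\psi$ is well-defined (lands in $\SOM$), that it is a homomorphism, and that the intertwining relation $\Psi(\psi(A)\cdot z) = g(A)\cdot\Psi(z)$ holds; the statements about the kernel and the congruence condition then follow almost formally. The cleanest route is to \emph{derive} the formula for $\psi(A)$ from the intertwining relation rather than to verify it \emph{a posteriori}: since $\Psi:\DD_M^+\to\HH_2$ is an isomorphism, the action of $g(A) = \mathrm{diag}(A,{}^t\bar A^{-1})$ on $\HH_2$ is $\tau\mapsto A\tau\,({}^t\bar A^{-1})^{-1} = A\tau A^*$, and conjugating by $\Psi^{-1}$ gives a well-defined automorphism of $\DD_M^+$. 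One must then check this automorphism is the restriction of an integral linear map on $M\otimes\CC$, i.e.\ that the matrix entries, when expressed in the coordinates $z_3 = \tau_{11}$, $z_4 = \tau_{22}$, $z_5 + \w z_6 = \tau_{12}$, $z_5+\w^2 z_6 = \tau_{21}$, are rational integers. Writing $\tau\mapsto A\tau A^*$ out in the real coordinates $z_3,\dots,z_6$ and using $\w + \w^2 = -1$, $\w - \w^2 = \sqrt{-3}$ produces exactly the $4\times4$ block displayed; the first two coordinates $z_1,z_2$ are forced by the constraints $z_1 = 1$ and $z_2 = -2\det\Psi(z)$ together with $\det(A\tau A^*) = |\det A|^2\det\tau$, and since $|\det A|^2 = 1$ for $A\in\GL_2(\ZZ[\w])$ we get the identity block $\I_2$ in the first two coordinates.

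Once the formula is in hand, that $\psi$ is a homomorphism follows because $A\mapsto(\tau\mapsto A\tau A^*)$ is an anti-homomorphism into $\mathrm{Aut}(\HH_2)$ composed with... — more carefully, $g(A)g(A') = g(AA')$ in $\HG$, so $\psi(A)\psi(A')$ and $\psi(AA')$ induce the same automorphism of $\DD_M^+$ via $\Psi$, hence are equal as integral matrices since the representation of $\OM$ on $\DD_M^+$ is faithful. That $\psi(A)\in\SOM$ (not merely $\OM$): it preserves $Q$ by construction, it preserves $\DD_M^+$ because $g(A)$ preserves $\HH_2$, and $\det\psi(A) = 1$ because $\GL_2(\ZZ[\w])$ is connected-in-spirit — concretely, $\det\psi$ is a continuous (indeed polynomial) $\{\pm1\}$-valued function of $A$ that equals $1$ at $A = \I_2$, and $\GL_2(\ZZ[\w])$ is generated by elementary matrices and diagonal units, on each of which one checks $\det\psi = 1$ directly; alternatively one notes $\psi$ factors through the spin-type double cover so lands in $\SOM$. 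For the kernel: $\psi(A) = \I_6$ forces $A\tau A^* = \tau$ for all $\tau\in\HH_2$, which (taking $\tau$ near $\I_2$ and $\tau$ diagonal with distinct entries) forces $A$ to be a scalar $\zeta\I_2$ with $|\zeta|^2 = 1$, i.e.\ $\zeta\in\{\pm1,\pm\w,\pm\w^2\}$, and conversely these clearly act trivially. The congruence claim $\psi(g)\equiv\I_6\bmod 2$ iff $g\in G(2)$ is read off from the explicit entries: modulo $2$, the entries are quadratic expressions in the reductions of $a_1,\dots,a_4$ over $\ZZ[\w]/(2)\cong\FF_4$, and a short case check (or the observation that $\psi$ mod $2$ is the homomorphism $\GL_2(\FF_4)\to\mathrm{O}(q_M)$ whose kernel is exactly the image of $G(2)$) gives the equivalence; one should be mildly careful that the factors of $\sqrt{3}/2$ and $2/\sqrt{3}$ genuinely cancel to leave integers before reducing mod $2$.

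For part (2), this is a direct check: the matrix $u_1 = \I_4\oplus\begin{bmatrix} 1 & -1 \\ 0 & -1\end{bmatrix}$ acts on $(z_5,z_6)$ by $(z_5,z_6)\mapsto(z_5 - z_6,\, -z_6)$ while fixing $z_3,z_4$, so under $\Psi$ the off-diagonal entries transform as $z_5 + \w z_6\mapsto (z_5 - z_6) + \w(-z_6) = z_5 - (1+\w)z_6 = z_5 + \w^2 z_6$ and symmetrically $z_5 + \w^2 z_6\mapsto z_5 + \w z_6$, which swaps the $(1,2)$ and $(2,1)$ entries of $\Psi(z)$ while fixing the diagonal — exactly transposition. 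One must also confirm $u_1\in\OM$: that ${}^t u_1 Q u_1 = Q$ is immediate since $\begin{bmatrix}1 & -1\\0 & -1\end{bmatrix}$ preserves the form $\begin{bmatrix}-4 & 2\\2 & -4\end{bmatrix}$ (check: the quadratic form $-4x^2 + 4xy - 4y^2$ is invariant under $(x,y)\mapsto(x-y,-y)$), and $u_1(p_0)\in\DD_M^+$ is verified at the base point $p_0 = [\sqrt2:\sqrt2:\sqrt{-1}:\sqrt{-1}:0:0]$, where $(z_5,z_6) = (0,0)$ is fixed, so $u_1(p_0) = p_0\in\DD_M^+$.

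The main obstacle is purely computational: carrying out the expansion of $\tau\mapsto A\tau A^*$ in the real coordinates $z_3,\dots,z_6$ and matching it against the displayed matrix, in particular getting all the $\IM$- and $\RE$-entries with the correct $\w$-twists and signs, and verifying the $\sqrt3$-normalization produces integer (rather than merely rational) matrix entries. There is no conceptual difficulty once one commits to the coordinate dictionary $\tau_{11} = z_3$, $\tau_{22} = z_4$, $\tau_{12} = z_5 + \w z_6$, $\tau_{21} = z_5 + \w^2 z_6$; the bookkeeping is just lengthy. A secondary, smaller point requiring care is the mod-$2$ analysis for the $G(2)$ characterization, where one should exhibit the reduction as the map to $\mathrm{O}(q_M)$ appearing before Proposition \ref{quotient group} to avoid an ad hoc case-by-case check.
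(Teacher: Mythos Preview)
Your proposal is correct and matches the paper's approach: the paper's entire proof reads ``The proof is straight-forward,'' so your sketch --- deriving the $4\times4$ block from $\tau\mapsto A\tau A^*$ via the coordinate dictionary $\tau_{11}=z_3$, $\tau_{22}=z_4$, $\tau_{12}=z_5+\w z_6$, $\tau_{21}=z_5+\w^2 z_6$, then reading off the kernel, determinant, and congruence claim --- is exactly the intended direct computation, spelled out in more detail than the paper itself provides.
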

\begin{proof}
The proof is straight-foward.
\end{proof}
\begin{lem}
A subset
\[
 \OM_0 = \{ [a_{ij}] \in \OM : \begin{bmatrix}a_{11} & a_{12} \\ a_{21} & a_{22} \end{bmatrix} \equiv
 \begin{bmatrix}1 & 0 \\ 0 & 1 \end{bmatrix} \mod 2\}
\]
is a normal subgroup of $\OM$. and we have 
\[
 \OM = \OM_0 \cup g_0 \OM_0, \qquad g_0 = \left[ \begin{array}{cc} 0 & 1 \\ 1 & 0 \\ \end{array} \right] \oplus \I_4.
\]
\end{lem}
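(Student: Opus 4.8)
The plan is to realize $\OM_0$ as the kernel of an explicit homomorphism $\rho\colon\OM\to\ZZ/2\ZZ$ and then to check that $g_0$ is sent to the nontrivial element.

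First I would reduce modulo $2$. Let $e_1,\dots,e_6$ be the standard basis of $M$ in which $Q$ has the block form $\bigl[\begin{smallmatrix}0&1\\1&0\end{smallmatrix}\bigr]\oplus\bigl[\begin{smallmatrix}0&2\\2&0\end{smallmatrix}\bigr]\oplus\bigl[\begin{smallmatrix}-4&2\\2&-4\end{smallmatrix}\bigr]$. The reduction $\bar B$ of $Q$ on $\overline M:=M/2M\cong\FF_2^6$ is a symmetric bilinear form whose last four rows vanish while its top-left $2\times2$ block is $\bigl[\begin{smallmatrix}0&1\\1&0\end{smallmatrix}\bigr]$, so its radical is exactly $V_0:=\langle\bar e_3,\bar e_4,\bar e_5,\bar e_6\rangle$ and the induced form on $\overline M/V_0\cong\FF_2^2$ is nondegenerate. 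Since $M$ is even, $q(v):={}^tvQv/2$ is integer-valued and its residue modulo $2$ depends only on $v\bmod 2M$, giving a quadratic form $\bar q\colon\overline M\to\FF_2$ with $\bar q(u+v)=\bar q(u)+\bar q(v)+\bar B(u,v)$. The four relevant diagonal entries of $Q$ are $0,0,-4,-4$, all divisible by $4$, and $\bar B$ vanishes on $V_0$; hence $\bar q$ kills the generators of $V_0$ and is additive on $V_0$, so $\bar q|_{V_0}\equiv 0$, and then $\bar q$ is constant on cosets of $V_0$ and descends to a quadratic form $\tilde q$ on $\overline M/V_0$. In the basis induced by $\bar e_1,\bar e_2$ one has $\tilde q(\bar e_1)=\tilde q(\bar e_2)=0$ and $\tilde q(\bar e_1+\bar e_2)=1$, so $\tilde q$ is the split (plus-type) form, whose orthogonal group $\mathrm{O}(\overline M/V_0,\tilde q)$ consists only of the identity and the swap $\bar e_1\leftrightarrow\bar e_2$ and hence is cyclic of order $2$.

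Next, let $g=[a_{ij}]\in\OM\subset\mathrm{O}(M)$. From ${}^tgQg=Q$ the reduction $\bar g\in\GL_6(\FF_2)$ preserves $\bar B$ and $\bar q$, hence preserves the radical $V_0$, and so induces an element $\rho(g)\in\mathrm{O}(\overline M/V_0,\tilde q)\cong\ZZ/2\ZZ$. By functoriality of the induced action on $\overline M/V_0$ the map $g\mapsto\rho(g)$ is a group homomorphism, and in the basis above the matrix of $\rho(g)$ is precisely $\bigl[\begin{smallmatrix}a_{11}&a_{12}\\a_{21}&a_{22}\end{smallmatrix}\bigr]\bmod 2$. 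Therefore $\ker\rho=\OM_0$, which shows at once that $\OM_0$ is normal in $\OM$ of index dividing $2$.

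It remains to verify that $g_0=\bigl[\begin{smallmatrix}0&1\\1&0\end{smallmatrix}\bigr]\oplus\I_4$ lies in $\OM\smallsetminus\OM_0$. It interchanges $e_1$ and $e_2$ and fixes $e_3,\dots,e_6$, so ${}^tg_0Qg_0=Q$; and it fixes the base point $p_0=[\sqrt2:\sqrt2:\sqrt{-1}:\sqrt{-1}:0:0]\in\DD_M^+$, so by the criterion recalled just after the definition of $\OM$ we get $g_0\in\OM$. Its top-left $2\times2$ block reduces to the swap, so $\rho(g_0)\ne 1$; hence $\rho$ is surjective, $[\OM:\OM_0]=2$, and since $g_0^2=\I_6\in\OM_0$ we conclude $\OM=\OM_0\sqcup g_0\OM_0$. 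The one step needing care is the passage from $\bar q$ to $\tilde q$: with the bilinear form $\bar B$ alone one would obtain merely an embedding $\OM/\OM_0\hookrightarrow\mathrm{O}(\overline M/V_0,\bar B)=\GL_2(\FF_2)$, so it is the even-lattice quadratic refinement that pins the index down to exactly $2$.
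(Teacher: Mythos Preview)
Your proof is correct and is a genuinely different, more conceptual route than the paper's. The paper argues by direct computation: from ${}^t\mathbf a_kQ\mathbf a_k=0$ ($k=1,2$) and ${}^t\mathbf a_1Q\mathbf a_2=1$ it reads off that the top-left $2\times2$ block is congruent to $\I_2$ or to the swap modulo $2$; it then uses ${}^t\mathbf a_1Q\mathbf a_k={}^t\mathbf a_2Q\mathbf a_k=0$ ($k\ge3$) to get $a_{1k}\equiv a_{2k}\pmod 2$ and finally checks by hand that $\OM_0$ is closed under multiplication. Your argument replaces all of this by the single observation that the radical of $Q\bmod 2$ is $V_0=\langle\bar e_3,\dots,\bar e_6\rangle$, so every $\bar g$ is block lower-triangular and the top-left block gives a homomorphism to $\mathrm O(\tilde q)$; the even-lattice quadratic refinement then forces the target to be $\ZZ/2\ZZ$ rather than all of $\GL_2(\FF_2)$. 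This packaging makes normality and the index bound immediate, and your explicit check that $g_0$ fixes $p_0$ (hence lies in $\OM$) is a point the paper leaves implicit. In fact your radical argument yields the slightly sharper statement $a_{1k}\equiv a_{2k}\equiv 0\pmod 2$ for $k\ge3$, which is exactly what makes the block-triangular structure (and hence the multiplicativity of $\rho$) transparent; the paper's weaker $a_{1k}\equiv a_{2k}$ is enough for its hand check, but your formulation explains \emph{why} the check succeeds.
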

\begin{proof}
For $g = [\mathbf{a}_1, \cdots, \mathbf{a}_6] = [a_{ij}] \in \OM$, we have
\begin{align*}
0 &= \frac{1}{2} {}^t\mathbf{a}_k Q \mathbf{a}_k = a_{1k}a_{2k} + 
2(a_{3k} a_{4k} - a_{5k}^2 + a_{5k} a_{6k} - a_{6k}^2) \qquad (k=1, 2) \\
1 &= {}^t\mathbf{a}_1 Q \mathbf{a}_2 = a_{11} a_{22} + a_{12} a_{21} + 2(a_{13}a_{24} + a_{14}a_{23} 
- 2a_{15}a_{25} + a_{15}a_{26} +a_{16}a_{25} -2 a_{16} a_{26}).
\end{align*}
Therefore we see that
\[
 \begin{bmatrix}a_{11} & a_{12} \\ a_{21} & a_{22} \end{bmatrix} \equiv
 \begin{bmatrix}1 & 0 \\ 0 & 1 \end{bmatrix} \ \text{or} \ 
\begin{bmatrix}0 & 1 \\ 1 & 0 \end{bmatrix} \mod 2.
\]
The conditions ${}^t\mathbf{a}_1 Q \mathbf{a}_k = {}^t\mathbf{a}_2 Q \mathbf{a}_k = 0 \ (k=3,4,5,6)$ imply 
$a_{1k} \equiv a_{2k} \mod 2 \ (k = 3,4,5,6)$. 
Using this equalities, we can check that $gh \in \OM_0$ for $g, h \in \OM_0$. 
\end{proof}
Let us consider the following elements of $\OM$.
\[
 h_1 = h(1,0,0,0), \quad h_2 = h(0,1,0,0), \quad h_3 = h(0,0,1,0), \quad h_4 = h(0,0,0,1), \quad h_i' = g_0 h_i g_0,
\]
\[
g_1 = \I_2 \oplus \left[ \begin{array}{cccc}
1 & 0 & 0 & 0 \\ 
1 & 1 & 2 & -1 \\ 
1 & 0 & 1 & 0 \\ 
0 & 0 & 0 & 1 \end{array} \right], \quad
g_2 = \I_2 \oplus \left[ \begin{array}{cccc}
1 & 0 & 0 & 0 \\ 
1 & 1 & -1 & 2 \\ 
0 & 0 & 1 & 0 \\ 
1 & 0 & 0 & 1 \end{array} \right]
\]
\[
\I_{4,2} = \I_4 \oplus (-\I_2), \quad u_0 = \I_2 \oplus 
\left[ \begin{array}{cc} 0 & 1 \\ 1 & 0 \\ \end{array} \right] \oplus \I_2, \quad 
u_1 = \I_4 \oplus \left[ \begin{array}{cc} 1 & -1 \\ 0 & -1 \\ \end{array} \right], \quad
u_2 = \I_4 \oplus \left[ \begin{array}{cc} 0 & -1 \\ 1 & -1 \\ \end{array} \right]
\] 
Put $\SOM_0 = \OM_0 \cap \SOM$. We have 
\[
 \SOM = \SOM_0 \cup (g_0 u_0 \I_{4,2})\SOM_0, \quad \OM = \OM_0 \cup (g_0 u_0 \I_{4,2})\OM_0.
\]
The action of $g_0u_0 \I_{4,2}$ on $\HH_2$ is given by 
\[
\Phi(g_0 u_0 \I_{4,2} \cdot z) = -\frac{1}{2}\Psi(z)^{-1} = W \cdot \Psi(z), \qquad
W = \begin{bmatrix} 0 & -\I_2 \\ 2 \I_2 & 0 \end{bmatrix}.
\]
The involution $W$ ia a normalizer of $\HG_0(2)$ and $\HG_1(2)$. We denote semi-direct products 
$\HG_0(2) \rtimes \left< W \right>$ and $\HG_1(2) \rtimes \left< W \right>$ by $\HG_0^*(2)$ and 
$\HG_1^*(2)$ respectively.
\begin{prop} \label{orth-gen}
The group $\SOM_0$ is generated by
\[
  h_i, \ h_i' \ (i = 1,2,3,4), \quad g_1, \ g_2, \ u_0 g_1 u_0, \quad \pm \I_{2,4}, \quad u_0 u_1, \ u_2.
\]
\end{prop}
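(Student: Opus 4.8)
The plan is to realize $\SOM_0$ as a group acting on $\HH_2$ and identify it with an explicit arithmetic group, then read off a generating set. The key observation is that, via the isomorphism $\Psi : \DD_M^+ \to \HH_2$, Lemmas \ref{translations} and \ref{uni} already tell us how many of the listed generators act: the $h_i$ map to the upper-unipotent matrices $g(B)^*$ for $B$ ranging over the four "integral" Hermitian matrices, while $g_0 h_i g_0 = h_i'$ conjugate to the lower-unipotent ones $g(B)_*$ (using that $\Psi(g_0 u_0 \I_{4,2}\cdot z) = W\cdot\Psi(z)$ and $W$ swaps the two unipotent blocks up to the factor $2$); and $\psi$ sends $G(2) \subset \GL_2(\ZZ[\w])$ into $\SOM$ with $\psi(g)\equiv \I_6 \bmod 2$ exactly when $g \in G(2)$. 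So the images in $\HH_2$ of $h_i, h_i', g_1, g_2, u_0 g_1 u_0$ should be precisely the generators $g(A)$ ($A\in G(2)$), $g(B)^*$, $g(B)_*$ of $\HG_1(2)$ from Proposition \ref{unitary-prop}(1) — here $g_1, g_2$ and $u_0 g_1 u_0$ must be checked to equal $\psi$ of three specific elementary matrices in $G(2)$ that, together with the diagonal torus, generate $G(2)$.

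First I would pin down the homomorphism. Combining $\Psi$ with the action, $\SOM_0$ maps into the group of automorphisms of $\HH_2$ generated by $\HG$ and $T$; I would show the image of $\SOM_0$ lands in $\HG_1^*(2)$ (using the congruence $\begin{bmatrix}a_{11}&a_{12}\\a_{21}&a_{22}\end{bmatrix}\equiv \I_2 \bmod 2$ defining $\OM_0$, which should translate into the $C\equiv 0$, $A\equiv \I_2 \bmod 2$ conditions), and conversely that it is surjective onto $\HG_1(2)$ (the non-transpose part) by exhibiting preimages of the Proposition \ref{unitary-prop}(1) generators among products of $h_i, h_i', g_1, g_2, u_0 g_1 u_0$. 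The kernel of $\SOM_0 \to \mathrm{Aut}(\HH_2)$ is $\pm\I_6$ by Lemma \ref{uni}(1) (the kernel of $\psi$ is $\{\pm1,\pm\w,\pm\w^2\}$, whose image in $\SOM$ is $\{\pm\I_6\}$ since $\w\mapsto$ a rotation block, but one must confirm $\psi(\w)= -\I_6$ or trace through which scalars die). So $\SOM_0$ is, up to the central $\pm\I_6$ and the $T$-part, identified with $\HG_1(2)$.

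Next I would handle the $T$-component: note $T$ itself is realized by $u_1$ via Lemma \ref{uni}(2), but $u_1 \notin \SOM$ (its bottom block has determinant $-1$), so inside $\SOM_0$ the transpose only appears combined with an orientation-reversing piece — this is the role of $u_0 u_1$ and $u_2$ and $\pm\I_{2,4}$ in the list. I would compute the determinants and the $\bmod\,2$ reductions of $u_0u_1$, $u_2$, $\pm\I_{2,4}$ to see that they lie in $\SOM_0$, that $u_0u_1$ (or $u_2$) acts as $T$ composed with an element already generated, and that $\pm\I_{2,4} = \mp\I_6 \cdot (\mp\I_{4,2})$ supplies the central $-\I_6$. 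Then an element $g\in\SOM_0$ either acts on $\HH_2$ through $\HG_1(2)$, in which case $g$ times a suitable word in $h_i,h_i',g_1,g_2,u_0g_1u_0$ lies in the kernel $\{\pm\I_6\}$ and we are done; or it acts through $\HG_1(2)\cdot T$, in which case we first multiply by $u_0u_1$ (resp. $u_2$) to reduce to the previous case.

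The main obstacle is the bookkeeping in the surjectivity/normal-form step: verifying that $g_1, g_2, u_0g_1u_0$ really are $\psi$ applied to elementary matrices generating $G(2)$ together with the diagonal (this needs Lemma \ref{division-lemma}-type reasoning inside $G(2)$, since Proposition \ref{unitary-prop}(1) uses \emph{all} of $G(2)$ for $g(A)$), and that the interplay of $g_0, u_0, \I_{4,2}$ with the $T$-twist is consistent — i.e. that no further generator is needed to account for $\mathrm{SO}$ versus $\mathrm{O}$ and for the transpose. I expect this to reduce, after the conceptual identification above, to a finite check that the listed matrices generate $G(2)$ (a standard fact about $\GL_2$ over the Eisenstein integers mod $2$) plus tracking $\pm$ signs; the $W$-conjugation relations and the formula $z_2 = -2\det\Psi(z)$ keep the $z_2$-coordinate automatic throughout.
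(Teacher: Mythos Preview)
Your approach is genuinely different from the paper's, and it contains both a factual error and a logical gap.

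\textbf{Factual error.} You propose that $\SOM_0$ maps into $\HG_1^*(2)$ with a ``$T$-part''. This is wrong on both counts. First, $u_1$ has determinant $-1$, so $T$ never appears for elements of $\SOM_0$; the correct statement (this is the paper's Theorem~\ref{group-iso}) is $\SOM_0/\{\pm 1\} \cong \HG_0(2)/\{\pm 1,\pm\omega,\pm\omega^2\}$ with no $T$. Second, the target is $\HG_0(2)$, not $\HG_1(2)$: in the paper's correspondence one has
\[
u_0 u_1 = \psi\begin{pmatrix}0&1\\1&0\end{pmatrix},\quad
u_2 = \psi\begin{pmatrix}0&-1\\1&-1\end{pmatrix},\quad
\I_{2,4} = \psi\begin{pmatrix}1&0\\0&-1\end{pmatrix},
\]
and none of these $2\times 2$ matrices lies in $G(2)$. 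So your division of cases ``acts through $\HG_1(2)$ or through $\HG_1(2)\cdot T$'' is not what is happening; these elements account for the quotient $\HG_0(2)/\HG_1(2)$, not for a transpose twist.

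\textbf{Logical gap.} Even after correcting the target to $\HG_0(2)$, your strategy needs the containment ``image of $\SOM_0$ in $\mathrm{Aut}(\HH_2)$ lies in $\HG_0(2)/\{\pm 1,\pm\omega,\pm\omega^2\}$''. You assert this should follow from the congruence defining $\OM_0$, but you have no mechanism to translate a $\bmod\,2$ condition on a $6\times 6$ integer matrix into the $C\equiv 0$ condition on a $4\times 4$ matrix over $\ZZ[\omega]$ \emph{before} the correspondence is established. In the paper, this containment is exactly Theorem~\ref{group-iso}, and its proof \emph{uses} Proposition~\ref{orth-gen}; so your argument is circular relative to the paper's logic. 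To make your approach stand alone you would need an independent a priori proof that every element of $\SOM_0$ acts on $\HH_2$ through $\HG_0(2)$ (e.g.\ via an explicit identification of the Eisenstein lattice inside $M\otimes\CC$ preserved by $\SOM_0$), and you have not supplied one.

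\textbf{What the paper actually does.} The paper proves Proposition~\ref{orth-gen} by a direct division algorithm on $6\times 6$ matrices, parallel to the proof of Proposition~\ref{unitary-prop}: given $X\in\SOM_0$, one first normalizes the second column to $e_2$ using $h_2, h_1', \I_{4,2}, u_2, h_3, h_3'$ (via a descent on $|a_5+\omega a_6|$), then normalizes the fourth column using $g_1, u_0 g_1 u_0$, and finally checks the remaining $2\times 2$ block is handled by $\I_{4,2}, u_2$ and powers $g_1^{2m} g_2^{2n}$. Only \emph{after} this is Theorem~\ref{group-iso} deduced by matching the generators with elements of $\HG_0(2)$.
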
 
\begin{proof} 
We can prove by the same method with the proof of Proposition \ref{unitary-prop}. 
Let $G$ be the group generated by matrices in the Proposition. We show that 
for any $X \in \SOM_0$, there exists $g \in G$ such that $gX \in G$.
\\ \indent
(i) Let $\mathbf{a}_2 = {}^t(a_1, \cdots, a_6)$ be the second column of $X \in \SOM_0$. 
By transformations
\[
 h_2 : (a_2, a_3) \mapsto (a_2 - 2a_3, a_3), \qquad 
 h_1' : (a_2, a_3) \mapsto (a_2, a_2 + a_3),
\]
we can change the value $a_2 a_3$ into $0$. Then we hvae $a_3 =0$ since $a_2 \equiv 1 \mod 2$. 
By the condition ${}^t\mathbf{a}_2Q\mathbf{a}_2 = 0$, we have
\[
 a_1 a_2 - 2(a_5^2 - a_5 a_6 + a_6^2) = 0 \qquad \therefore \
a_1 a_2 = 2|a_5 + \w a_6|^2.
\]
Put $z(\mathbf{a}_2) = a_5 + \w a_6$. Because we have
\[
 z(\I_{4,2} \cdot \mathbf{a}_2) = - z(\mathbf{a}_2), \qquad z(u_2 \cdot \mathbf{a}_2) = \w z(\mathbf{a}_2),
\]
we may assume that $\theta = \mathrm{arg} (z(\mathbf{a}_2))$ satisfys 
$-\frac{\pi}{6} \leq \theta \leq \frac{\pi}{6}$.
If $z(\mathbf{a}_2) \ne 0$, then we have $|a_1| < \sqrt{3}|z(\mathbf{a}_2)|$ or  $|a_2| < \sqrt{3}|z(\mathbf{a}_2)|$.
If $|a_i| < \sqrt{3}|z(\mathbf{a}_2)|$, then we have
\[
 |z(\mathbf{a}_2) - a_i|^2 = |z(\mathbf{a}_2)|^2 - 2 \cos \theta |z(\mathbf{a}_2)||a_i| + |a_i|^2 
\leq |z(\mathbf{a}_2)|^2 -(\sqrt{3}|z(\mathbf{a}_2)| -|a_i|)|a_i| < |z(\mathbf{a}_2)|.
\] 
Repeating transformations
\[
 z(h_3^{-1} \cdot \mathbf{a}_2) = z(\mathbf{a}_2) - a_1, \qquad
z((h_3')^{-1} \cdot \mathbf{a}_2) = z(\mathbf{a}_2) - a_2,
\]
we can change $z(\mathbf{a}_2)$ into $0$, that is, $\mathbf{a}_2$ into ${}^t(0,a_2,0,a_4,0,0)$.
Now multiplying $h_1, \ h_2', \ -\I_{4,2}$, we may assume that $\mathbf{a}_2 = {}^t(0,1,0,0,0,0)$. By the 
condition $^t{}XQX = Q$, we see that $X$ is the following form
\[
 X = \left[
\begin{array}{cc|cccc} 
1 & 0 & 0 & 0 & 0  & 0 \\
* & 1 & * & * & * & * \\ 
\hline
* & 0 &  &  &  &  \\
* & 0 &  & X' &  &  \\ 
* & 0 &  &   &  &  \\
* & 0 &  &  &  & 
\end{array} \right], \quad \I_2 \oplus X' \in \SOM_0.
\] 
Because $(\I_2 \oplus X')^{-1}X$ is a matrix of the form in Lemma \ref{translations}, we may assume that 
$X = \I_2 \oplus X'$.
\\ \indent
(ii) Let $\mathbf{a}_4 = {}^t(0,0,b_3,b_4,b_5,b_6)$ be the $4$-th column of $X = \I_2 \oplus X' \in \SOM_0$. 
We have
\[
 b_3 b_4 = |b_5 + \w b_6|^2
\]
since ${}^t\mathbf{a}_4Q\mathbf{a}_4 = 0$. By the similar arguments with (i), we can change $b_5 + \w b_6$ into $0$ 
using $g_1$ and $g_0g_1g_0$. Multiplying $u_0 u_1$ if necessary, we have $\mathbf{a}_4 = {}^t(0,0,0,1,0,0)$.
By the condition ${}^t X Q X = Q$,  we see that
\[
 X = \I_2 \oplus \left[ \begin{array}{cccc}
1 & 0 & 0 & 0 \\
* & 1 & * & * \\
* & 0 & a & b \\ 
* & 0 & c & d  \end{array} \right], \qquad
{}^t \begin{bmatrix} a & b \\ c & d  \end{bmatrix} 
\begin{bmatrix} -2 & 1 \\ 1 & -2  \end{bmatrix}
\begin{bmatrix} a & b \\ c & d  \end{bmatrix} = 
\begin{bmatrix} -2 & 1 \\ 1 & -2  \end{bmatrix}, \qquad ad-bc=1.
\]
Now it is easy to check that $X$ is obtaind from $\I_{4,2}$, $u_2$ and 
\[
 X = \I_2 \oplus \left[ \begin{array}{cccc}
1 & 0 & 0 & 0 \\
4(m^2 -mn + n^2) & 1 & 4m-2n & -2m+4n \\
2m & 0 & 1 & 0 \\ 
2n & 0 & 0 & 1  \end{array} \right]
= g_1^{2m} g_2^{2n}. 
\]  
\end{proof}
\begin{tm}  \label{group-iso} We have isomorphisms 
\begin{align*}
 \SOM_0 / \{\pm 1\} \cong \HG_0(2) / \{\pm1, \pm \w, \pm \w^2\}, \qquad 
\OM_0 / \{\pm 1\} \cong \HG_0(2) / \{\pm1, \pm \w, \pm \w^2\} \rtimes \left< T \right>, \\
\SOM / \{\pm 1\} \cong \HG_0^*(2) / \{\pm1, \pm \w, \pm \w^2\}, \qquad 
\OM / \{\pm 1\} \cong \HG_0^*(2) / \{\pm1, \pm \w, \pm \w^2\} \rtimes \left< T \right>
\end{align*}
as automorphisms of $\DD_M^+ \cong \HH_2$.
\end{tm}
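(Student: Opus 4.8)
The plan is to deduce all four isomorphisms from a single comparison of subgroups of $\mathrm{Aut}(\HH_2)$. Let $\rho:\OM\to\mathrm{Aut}(\HH_2)$ be the action of $\OM$ on $\DD_M^+$ transported through the isomorphism $\Psi$, and for any of the relevant groups $G$ (a subgroup of $\OM$ or of $\HG$, or $\langle T\rangle$, $\langle W\rangle$) write $\overline{G}$ for its image in $\mathrm{Aut}(\HH_2)$. First one notes that $\DD_M^+$ is Zariski dense in the quadric $\{{}^tzQz=0\}$, hence spans $\PP(M\otimes\CC)$, so $\ker\rho$ is the group of scalars in $\GL_6(\ZZ)$, i.e. $\{\pm\I_6\}$; likewise the kernel of $\HG\to\mathrm{Aut}(\HH_2)$ is the group of scalar unitary matrices $\{\pm1,\pm\w,\pm\w^2\}\I_4$. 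Granting this, the four stated isomorphisms are exactly the quotients by these kernels of the four group equalities
\begin{align*}
\overline{\SOM_0}&=\overline{\HG_0(2)}, & \overline{\SOM}&=\overline{\HG_0^*(2)},\\
\overline{\OM_0}&=\overline{\HG_0(2)}\rtimes\langle T\rangle, & \overline{\OM}&=\overline{\HG_0^*(2)}\rtimes\langle T\rangle,
\end{align*}
where I use that $W=\overline{g_0u_0\I_{4,2}}$ and $T=\overline{u_1}$ (Lemma~\ref{uni} and the formula recorded before Proposition~\ref{orth-gen}) and that $\overline{\HG_0^*(2)}=\overline{\HG_0(2)}\rtimes\langle W\rangle$ by definition.

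I would prove the core case $\overline{\SOM_0}=\overline{\HG_0(2)}$ by comparing generators. For $\overline{\HG_0(2)}\subseteq\overline{\SOM_0}$: by Proposition~\ref{unitary-prop}, $\HG_0(2)$ is generated by the $g(A)$ with $A\in\GL_2(\ZZ[\w])$ together with $g(B)^*$ and $g(B)_*$. Here $g(A)=\overline{\psi(A)}$ with $\psi(A)\in\SOM_0$ (its upper-left $2\times2$ block is $\I_2$); $g(B)^*=\overline{h(m_1,\dots,m_4)}$ with $h(m_1,\dots,m_4)\in\SOM_0$ by Lemma~\ref{translations}; and the matrix identity $g(B)_*=W\,g(-B)^*\,W^{-1}$ exhibits $g(B)_*$ as $\overline{(g_0u_0\I_{4,2})\,h(-m)\,(g_0u_0\I_{4,2})^{-1}}$, the conjugate lying in $\SOM_0$ since $\SOM_0\trianglelefteq\SOM$. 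For $\overline{\SOM_0}\subseteq\overline{\HG_0(2)}$: I would run through the generators of $\SOM_0$ from Proposition~\ref{orth-gen}. The $h_i$ act as $g(B)^*$; direct computation with $\Psi$ gives $\overline{h_i'}=g(B)_*$, $\overline{\I_{4,2}}=g(\mathrm{diag}(1,-1))$, $\overline{u_2}=g(\mathrm{diag}(1,\w^2))$, $\overline{u_0u_1}=g\!\left(\left[\begin{smallmatrix}0&1\\1&0\end{smallmatrix}\right]\right)$; and matching the explicit matrices $g_1,g_2$ against the formula of Lemma~\ref{uni}(1) identifies them (hence also $u_0g_1u_0$) with $\psi$ of explicit unitary matrices, so all images are of the form $g(A)$. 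Since $g(A),g(B)^*,g(B)_*$ all lie in $\HG_0(2)$, this gives the inclusion.

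The remaining three equalities are then formal. From the coset decompositions recorded before Proposition~\ref{orth-gen}, $\SOM=\SOM_0\sqcup(g_0u_0\I_{4,2})\SOM_0$, $\OM=\OM_0\sqcup(g_0u_0\I_{4,2})\OM_0$, and $\OM_0=\SOM_0\sqcup u_1\SOM_0$ (the last because $u_1\in\OM_0$ with $\det u_1=-1$). Applying $\rho$ and inserting $\overline{\SOM_0}=\overline{\HG_0(2)}$, $\overline{g_0u_0\I_{4,2}}=W$, $\overline{u_1}=T$, these become the three displayed products; here one uses that $W$ normalizes $\HG_0(2)$ (stated before the theorem), that $W$ and $T$ are involutions of $\HH_2$, and that $T$ normalizes $\HG_0(2)$ and commutes with $W$ (from $Tg\tau=\bar gT\tau$ and a direct check $TWT^{-1}=W$). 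No quotient collapses, since $\ker\rho=\{\pm\I_6\}\subset\SOM_0$ while $g_0u_0\I_{4,2}\notin\OM_0$ (its upper-left block is $\left[\begin{smallmatrix}0&1\\1&0\end{smallmatrix}\right]$) and $u_1\notin\SOM$, forcing $W\notin\overline{\HG_0(2)}$ and $T\notin\overline{\HG_0^*(2)}$.

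The hard part will be the inclusion $\overline{\SOM_0}\subseteq\overline{\HG_0(2)}$ — concretely, pinning down the images in $\mathrm{Aut}(\HH_2)$ of the ``non-obvious'' generators $g_1,g_2,u_0g_1u_0$ (and, to a lesser degree, $h_i'$) as unitary matrices $g(A)$, i.e. reconciling the orthogonal generating set of Proposition~\ref{orth-gen} with the unitary one of Proposition~\ref{unitary-prop} via Lemma~\ref{uni}(1) and the explicit form of $\Psi$. This is elementary but requires care with the sixth roots of unity and the $\tfrac{2}{\sqrt3}$-normalization appearing in $\psi$; the rest is bookkeeping with the two generating sets.
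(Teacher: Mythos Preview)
Your proposal is correct and follows essentially the same route as the paper: identify the kernels of the two actions on $\HH_2$, establish $\overline{\SOM_0}=\overline{\HG_0(2)}$ by matching the generating sets of Proposition~\ref{orth-gen} and Proposition~\ref{unitary-prop} via Lemma~\ref{translations} and Lemma~\ref{uni}, and then read off the remaining three isomorphisms from the coset decompositions together with $\overline{g_0u_0\I_{4,2}}=W$ and $\overline{u_1}=T$. Your write-up is in fact more explicit than the paper's (you separate the two inclusions, justify the semidirect-product structure, and verify that no extra collapse occurs), and your identifications such as $\overline{u_2}=g(\mathrm{diag}(1,\w^2))$ agree with $\psi$; the paper simply lists the correspondences and invokes the same lemmas.
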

\begin{proof}
For generators in Proposition \ref{orth-gen}, we have
\begin{align*}
 g_1 = \psi(\begin{bmatrix} 1 & 0 \\ 1 & 1 \end{bmatrix}), \quad 
g_2 = \psi(\begin{bmatrix} 1 & 0 \\ \w^2 & 1 \end{bmatrix}), \quad 
u_0 g_1 u_0 = \psi(\begin{bmatrix} 1 & 1 \\ 0 & 1 \end{bmatrix}),\\
\I_{2,4} = \psi(\begin{bmatrix} 1 & 0 \\ 0 & -1 \end{bmatrix}), \quad
u_0 u_1 = \psi(\begin{bmatrix} 0 & 1 \\ 1 & 0 \end{bmatrix}), \quad
u_2 = \psi(\begin{bmatrix} 0 & -1 \\ 1 & -1 \end{bmatrix}),
\end{align*}
and $ h_i, \ h_i'$ correspond to $\begin{bmatrix} \I_2 & B \\ 0 & \I_2 \end{bmatrix}$,
$\begin{bmatrix} \I_2 & 0 \\ 2B & \I_2 \end{bmatrix}$. Considering these correspondence together with 
Proposition \ref{unitary-prop}, Lemma \ref{translations} and Lemma \ref{uni}, we see that 
\[
 \SOM_0 / \{\pm 1\} \cong \HG_0(2) / \{\pm1, \pm \w, \pm \w^2\}.
\]
Other isomorphisms hold since $g_0u_0 \I_{4,2}$ and $u_1$ correspond to $W$ and $T$ respectively, and since we have
\[
 \SOM = \SOM_0 \cup (g_0u_0 \I_{4,2})\SOM_0, \qquad \OM_0 = \SOM_0 \cup u_1 \SOM_0.
\]
\end{proof}
\subsection{The orthogonal group with level structure} 
Now we show Proposition \ref{quotient group}. 
Let $e_1, \cdots, e_6 $ be the standard basis of $M = \ZZ^6$. The dual lattice
$\check{M}$ is generated by
\[
 e_1, \quad e_2, \quad d_1 = \frac{1}{2}e_3, \quad d_2 = \frac{1}{2}e_4, \quad 
d_3 = \frac{1}{6}e_5 + \frac{1}{3}e_6, \quad d_4 = \frac{1}{3}e_5 + \frac{1}{6}e_6,
\]
and $\check{M}/M \cong (\ZZ / 2 \ZZ)^4 \oplus (\ZZ / 3 \ZZ)$ is generated by $d_1, d_2, d_3, d_4$. 
Elements of order $3$ in $\check{M}/M$ are $\pm 2d_3$ ($-2d_3 \equiv 2d_4 \mod \ZZ$). 
Elements of order $2$ in $\check{M}/M$ are
\[
 v_1 = d_1, \quad v_2 = d_2, \quad, v_3 = d_1 + d_2 + d_3 + d_4, \quad 
v_4 =d_1 + d_2 + 3 d_3, \quad v_5 = d_1 + d_2 + 3 d_4
\]
for which we have $q_M(v_i) = 0$, and
\begin{align*}
d_1 + d_2, \quad d_3 + d_4, \quad 3 d_3, \quad 3 d_4, \quad d_1 + d_3 + d_4 \quad
d_1 + 3 d_3, \quad d_1 + 3 d_4 \\
d_2 + d_3 + d_4, \quad d_2 + 3 d_3, \quad d_2 + 3 d_4
\end{align*}
for which the value of $q_M$ is $1$. Hence $\mathrm{O}(q_M)$ acts on $\{v_1, \cdots, v_5\}$ as permutations.
We have the following correspondence 
\[
 g_1 = (14)(35), \quad g_2 = (15)(34), \quad u_0 = (12), \quad
u_1 = (35), \quad u_2 = (345)
\]
Hence the composition $\OM \rightarrow \mathrm{O}(q_M) \rightarrow \mathrm{S}_5$ is surjective. 
Because $v_1, \cdots, v_5$ generate elements of order $2$ in $\check{M}/M$, 
we may identify $\mathrm{Ker}( \mathrm{O}(q_M) \rightarrow S_5 )$ with 
$\mathrm{Aut}(\ZZ / 3 \ZZ) = \mathrm{Aut}\{0,\ 2d_3,\ -2d_3 \}$. 
We see that this group is given by $\{ \mathrm{id}, \ -\I_6 \}$. Hence $\OM \rightarrow \mathrm{O}(q_M)$ is surjective and 
we have $\mathrm{O}(q_M) \cong \mathrm{S}_5 \times \{ \mathrm{id},\ -\I_6 \}$. Because we have 
$\OM = \SOM \cup g_0 \SOM$ and $g_0 \in \OMK$, we see that
\[
 \OM / \OMK \cong \SOM / \SOMK.
\]
\begin{lem} \label{cong-lem}
Let $g = [\mathbf{a}_1, \cdots, \mathbf{a}_6]$ be an element of $\OM$. 
We have $g \in \OMK$ if and only if
\begin{align*}
 &\mathbf{a}_3 \equiv e_3, \quad \mathbf{a}_4 \equiv e_4 \mod 2 \\
 &\mathbf{a}_5 + 2 \mathbf{a}_6 \equiv e_5 + 2 e_6, \quad
 2 \mathbf{a}_5 + \mathbf{a}_6 \equiv 2 e_5 + e_6 \mod 6.
\end{align*} 
\end{lem}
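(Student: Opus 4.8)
The plan is to unwind the definition of $\OMK$ as the kernel of the natural map $\OM \to \mathrm{O}(q_M)$ and reduce the triviality of the induced action on $\check{M}/M$ to a check on the four generators $d_1, d_2, d_3, d_4$. First I would recall the standard fact that any $g \in \mathrm{O}(M)$, being an isometry of $(M, Q)$, extends $\QQ$-linearly to an isometry of $(M \otimes \QQ, Q)$, hence preserves $\check{M} = \{ x \in M\otimes\QQ : {}^txQm \in \ZZ \text{ for all } m \in M\}$ and induces an automorphism $\bar g$ of $\check{M}/M$; this $\bar g$ is precisely the image of $g$ under $\OM \to \mathrm{O}(q_M)$. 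Thus $g \in \OMK$ if and only if $\bar g = \mathrm{id}$, i.e. $g(x) \equiv x \bmod M$ for all $x \in \check{M}$, and since $\check{M}/M$ is generated by $d_1, d_2, d_3, d_4$ this is equivalent to $g(d_i) \equiv d_i \bmod M$ for $i = 1,2,3,4$.

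Next I would compute $g(d_i)$ from $g(e_j) = \mathbf{a}_j$ by linearity:
\[
g(d_1) = \tfrac12 \mathbf{a}_3, \qquad g(d_2) = \tfrac12 \mathbf{a}_4, \qquad
g(d_3) = \tfrac16(\mathbf{a}_5 + 2\mathbf{a}_6), \qquad g(d_4) = \tfrac16(2\mathbf{a}_5 + \mathbf{a}_6).
\]
The congruence $g(d_1) \equiv d_1 \bmod \ZZ^6$ then reads $\tfrac12(\mathbf{a}_3 - e_3) \in \ZZ^6$, i.e. $\mathbf{a}_3 \equiv e_3 \bmod 2$, and similarly $g(d_2) \equiv d_2$ is $\mathbf{a}_4 \equiv e_4 \bmod 2$. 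Clearing the denominator $6$, $g(d_3) \equiv d_3$ becomes $\mathbf{a}_5 + 2\mathbf{a}_6 \equiv e_5 + 2e_6 \bmod 6$ and $g(d_4) \equiv d_4$ becomes $2\mathbf{a}_5 + \mathbf{a}_6 \equiv 2e_5 + e_6 \bmod 6$. Combining the four equivalences gives the statement in both directions.

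There is essentially no hard step; the only points requiring a little care are to confirm that $d_1,\dots,d_4$ genuinely generate $\check{M}/M$ (so that checking on them suffices), which is immediate from the identification $\check{M}/M \cong (\ZZ/2\ZZ)^4 \oplus (\ZZ/3\ZZ)$ recorded above, and to track the modulus $6$ correctly when clearing denominators in the conditions coming from $d_3$ and $d_4$. I would also note that neither of the two $\bmod 6$ congruences is redundant, since the order-$2$ parts $3d_3 \equiv \tfrac12 e_5$ and $3d_4 \equiv \tfrac12 e_6$ are independent in $\check{M}/M$; this is why the lemma keeps both $d_3$ and $d_4$.
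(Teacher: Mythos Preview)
Your argument is correct and is exactly the approach of the paper, which records only the one-line justification ``$g \in \OMK$ iff $g d_i \equiv d_i \bmod \ZZ$ ($i=1,2,3,4$)''; you have simply spelled out in detail the computation of $g(d_i)$ and the denominator-clearing that the paper leaves implicit.
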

\begin{proof}
This follows from the fact that $g \in \OMK$ iff $g d_i \equiv d_i \mod \ZZ \ (i =1,2,3,4)$.
\end{proof}
\begin{lem} \label{cong-lem2}
Let $g = [\mathbf{a}_1, \cdots, \mathbf{a}_6]$ be an element of $\OM$. 
We have $g \in \OME$ if and only if
\begin{align*}
 \mathbf{a}_i \equiv e_i \mod 2 \qquad (i = 3, \cdots, 6).
\end{align*}
\end{lem}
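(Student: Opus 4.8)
The plan is to translate the group-theoretic definition of $\OME$ into a statement about the action of $g$ on the discriminant group $\check M/M$, and then read off congruences on the columns of $g$. Recall that $\OME$ is by definition the kernel of $\OM \to \mathrm{O}(q_M) \to \mathrm{S}_5$, where the second arrow records the permutation action on $\{v_1, \dots, v_5\}$, and that we have already established $\mathrm{O}(q_M) \cong \mathrm{S}_5 \times \{\mathrm{id}, -\I_6\}$ with $-\I_6$ acting trivially on the $2$-torsion of $\check M/M$ and by inversion on its order-$3$ part. Consequently $g \in \OM$ lies in $\OME$ exactly when its image in $\mathrm{O}(q_M)$ fixes each $v_i$; and since $v_1, \dots, v_5$ generate the full $2$-torsion subgroup $(\check M/M)[2] \cong (\ZZ/2\ZZ)^4$, this is the same as saying that $g$ acts as the identity on $(\check M/M)[2]$.

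Next I would express $(\check M/M)[2]$ through generators visibly attached to the standard basis $e_3, \dots, e_6$. From the definitions, $v_1 = \frac12 e_3$, $v_2 = \frac12 e_4$, and using $3 d_3 = \frac12 e_5 + e_6$ and $3 d_4 = e_5 + \frac12 e_6$ one gets $v_4 - v_1 - v_2 \equiv \frac12 e_5$ and $v_5 - v_1 - v_2 \equiv \frac12 e_6 \pmod{M}$; since these four classes are independent over $\ZZ/2\ZZ$ and $(\check M/M)[2]$ has order $16$, the classes of $\frac12 e_3, \frac12 e_4, \frac12 e_5, \frac12 e_6$ form a basis of $(\check M/M)[2]$. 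Writing $g e_i = \mathbf{a}_i$, the condition $g\bigl(\tfrac12 e_i\bigr) \equiv \tfrac12 e_i \pmod{M}$ is precisely $\mathbf{a}_i \equiv e_i \pmod 2$; applying this for $i = 3, 4, 5, 6$ yields the stated equivalence.

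As a sanity check I would compare with Lemma \ref{cong-lem}: the congruences $\mathbf{a}_i \equiv e_i \bmod 2$ ($i = 3, \dots, 6$) obviously contain $\mathbf{a}_3 \equiv e_3$, $\mathbf{a}_4 \equiv e_4 \bmod 2$, while conversely the two mod-$6$ relations of Lemma \ref{cong-lem}, combined so as to eliminate the coefficients $1$ and $2$, force $3(\mathbf{a}_5 - e_5) \equiv 0$ and $3(\mathbf{a}_6 - e_6) \equiv 0 \bmod 6$, i.e. $\mathbf{a}_5 \equiv e_5$, $\mathbf{a}_6 \equiv e_6 \bmod 2$; this reconfirms $\OMK \subseteq \OME$ and makes visible that the nontrivial coset of $\OMK$ in $\OME$ consists of the elements acting by $-1$ on the order-$3$ part. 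The only point that genuinely needs attention is the claim in the first paragraph that a trivial action on $\{v_1, \dots, v_5\}$ already forces a trivial action on all of $(\check M/M)[2]$, which rests entirely on the generation statement verified in the second paragraph; apart from that, everything is a routine matrix–congruence translation and I foresee no real obstacle.
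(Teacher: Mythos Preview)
Your proof is correct and follows the same approach as the paper: both reduce membership in $\OME$ to the condition that $g$ act trivially on the $2$-torsion of $\check M/M$, using that the $v_i$ generate this subgroup. The paper's proof stops at that sentence, whereas you additionally spell out the convenient basis $\tfrac12 e_3,\dots,\tfrac12 e_6$ of $(\check M/M)[2]$ to make the column congruences explicit; this is a welcome clarification but not a different argument.
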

\begin{proof}
Let us recall that $\OME$ is defined as the kernel of the composition 
\[
 \OM \longrightarrow \mathrm{S}_5 \times \{ \pm 1\} \longrightarrow \mathrm{S}_5.
\]
Therefore $g \in \OM$ belongs to $\OME$ if and only if $g v_i = v_i \ (i=1, \cdots, 5)$, that is, 
$g$ acts trivilaly on $2$-torsions of $\check{M}/M$.
\end{proof}
\begin{tm} 
As automorphisms of $\DD_M^+ \cong \HH_2$, we have isomorphisms
\begin{align*}
\SOME / \{\pm 1\} \cong \HG_1(2) / \{\pm1 \}, \quad
\OME / \{\pm 1\} \cong \HG_1(2) / \{\pm1 \} \rtimes \left< W' \right>
\end{align*}
where $W' = T \cdot [A] \cdot W$ and $A = \begin{bmatrix} 0 & 1 \\ 1 & 0 \end{bmatrix}$.
\end{tm}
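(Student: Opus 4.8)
The plan is to deduce the final theorem from Theorem \ref{group-iso} together with the congruence characterizations in Lemma \ref{cong-lem} and Lemma \ref{cong-lem2}. First I would observe that $\SOME \subset \SOM_0$ and $\OME \subset \OM_0$: by Lemma \ref{cong-lem2}, an element $g \in \OME$ satisfies $\mathbf{a}_i \equiv e_i \bmod 2$ for $i = 3,\dots,6$, and feeding this into the two identities $\tfrac12\,{}^t\mathbf{a}_kQ\mathbf{a}_k = 0$ and ${}^t\mathbf{a}_1Q\mathbf{a}_2 = 1$ used in the proof of the lemma defining $\OM_0$ forces the $2\times 2$ corner of $g$ to be $\equiv \I_2 \bmod 2$. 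Hence under the isomorphism $\SOM_0/\{\pm1\} \cong \HG_0(2)/\{\pm1,\pm\w,\pm\w^2\}$ of Theorem \ref{group-iso}, the subgroup $\SOME/\{\pm1\}$ maps to some subgroup of $\HG_0(2)/\{\pm1,\pm\w,\pm\w^2\}$, and the task is to identify its image as $\HG_1(2)/\{\pm1\}$.

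Next I would chase the congruence conditions through the explicit maps $\psi$, $h(m_1,\dots,m_4)$ and $g_0 u_0 \I_{4,2} \mapsto W$, $u_1 \mapsto T$ from Lemma \ref{uni}, Lemma \ref{translations} and the discussion preceding Proposition \ref{orth-gen}. For the unitary part: by Lemma \ref{uni}(1) we have $\psi(g) \equiv \I_6 \bmod 2$ iff $g \in G(2)$, and comparing with Lemma \ref{cong-lem2} (which for a block-diagonal $\I_2 \oplus X'$ is exactly the statement that $X' \equiv \I_4 \bmod 2$), one sees $\psi(g) \in \OME$ iff $g \in G(2)$. Since $\HG_1(2)$ is generated by $g(A)$ with $A \in G(2)$, by $g(B)^*$ and by $g(B)_*$ (Proposition \ref{unitary-prop}(1)), and since $h_i, h_i'$ — which correspond to the $g(B)^*, g(B)_*$ — all lie in $\OME$ (their columns $\mathbf{a}_3,\dots,\mathbf{a}_6$ visibly satisfy the congruences of Lemma \ref{cong-lem2}), the generators of $\HG_1(2)$ all come from $\OME$; conversely any element of $\SOME$ reduces, by the division-algorithm argument in Proposition \ref{orth-gen}, to a product of these generators, so the image of $\SOME/\{\pm1\}$ is exactly $\HG_1(2)/\{\pm1\}$. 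This gives the first isomorphism. One should also note $\mathrm{Ker}\,\psi = \{\pm1,\pm\w,\pm\w^2\}$ intersects $\SOME$ in $\{\pm1\}$ only, so no extra collapsing of the center occurs — this is why one gets $\HG_1(2)/\{\pm1\}$ rather than $\HG_1(2)/\{\pm1,\pm\w,\pm\w^2\}$.

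For the orthogonal (as opposed to special) version, I would use $\OME = \SOME \cup u_1' \SOME$ for a suitable coset representative obtained from $u_1$ by correcting determinant and congruence class. The element $u_1$ corresponds to $T$, but $u_1$ need not lie in $\OME$ nor have determinant $1$; the claim $W' = T \cdot [A] \cdot W$ with $A = \left[\begin{smallmatrix} 0 & 1 \\ 1 & 0 \end{smallmatrix}\right]$ is precisely the adjustment: $[A]$ (the image of $\psi$ or of a $g(A)$ with $A \notin G(2)$, hence landing in $\HG_0(2)$ but acting nontrivially on $\check M/M$) composed with $W$ and with $T$ produces an element which does lie in $\OME \setminus \SOME$. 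So I would exhibit $W' \in \OME$ explicitly as the orthogonal transformation corresponding to $T\cdot[A]\cdot W$, check (using Lemma \ref{cong-lem2}) that it satisfies the $\OME$-congruences, check $\det = \pm1$ puts it in the orthogonal but not special group, and check it normalizes $\HG_1(2)$; then $\OME/\{\pm1\} \cong (\SOME/\{\pm1\}) \rtimes \langle W' \rangle \cong \HG_1(2)/\{\pm1\} \rtimes \langle W'\rangle$ follows formally.

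The main obstacle I anticipate is the bookkeeping in the orthogonal case: verifying that the naive lift $T$ of transpose does \emph{not} preserve the level structure and pinning down the exact correction $[A]\cdot W$ that does. Concretely one must track how $T$, $W$ and $g(A)$ for $A \notin G(2)$ act on the $2$-torsion subgroup $\{v_1,\dots,v_5\} \subset \check M/M$ (equivalently on $\mathrm{O}(q_M) \to \mathrm{S}_5$), since $\OME$ is the kernel of that action; the computation is the same kind as the table $g_1 = (14)(35)$, etc., already in the paper, but requires care because $W = g_0 u_0 \I_{4,2}$ and $g_0$ lies in $\OMK$ while $u_0 = (12)$ and $\I_{4,2}$ acts as $-\I_6$ on the order-$3$ part. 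Once that action table is in hand, everything else is the routine matching of generators already carried out for Theorem \ref{group-iso}.
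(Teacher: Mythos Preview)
Your overall strategy matches the paper's: use Theorem \ref{group-iso} to transport the question to $\HH_2$, check that the generators of $\HG_1(2)$ from Proposition \ref{unitary-prop}(1) land in $\SOME$ via Lemmas \ref{translations}, \ref{uni} and \ref{cong-lem2}, and then handle the extra coset for $\OME$. Two points deserve correction, though.

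\textbf{The converse inclusion.} Your proposed argument ``any element of $\SOME$ reduces, by the division-algorithm argument in Proposition \ref{orth-gen}, to a product of these generators'' does not work as stated. The algorithm of Proposition \ref{orth-gen} uses $g_1, g_2, u_0 g_1 u_0, u_0 u_1, u_2$, and by the permutation table $g_1=(14)(35)$, $g_2=(15)(34)$, $u_0=(12)$, $u_1=(35)$, $u_2=(345)$ none of these lie in $\OME$; so running the reduction on an element of $\SOME$ will take you outside $\SOME$ at intermediate steps, and you cannot conclude that the end product lies in the subgroup generated by $h_i,h_i',\psi(G(2))$. The paper avoids this by an index count: once the generators of $\HG_1(2)$ are shown to map into $\SOME/\{\pm1\}$, one compares
\[
\SOM/\SOME \cong \mathrm{S}_5 \qquad\text{with}\qquad \HG_0(2)/\bigl(\HG_1(2)\cdot\{1,\w,\w^2\}\bigr)\cong \GL_2(\FF_4)/\FF_4^{\times}\cong \mathrm{A}_5,
\]
and since $\HG_0^*(2)=\HG_0(2)\rtimes\langle W\rangle$ supplies the missing factor of $2$, both sides have index $120$ and the inclusion is forced to be an equality. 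Your observation that $\{\pm\w,\pm\w^2\}\not\subset\HG_1(2)$ is exactly what makes this index computation come out right.

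\textbf{The coset representative for $\OME$.} Rather than ``correcting $u_1$'', the paper simply takes $g_0\I_{4,2}$. One checks directly from Lemma \ref{cong-lem2} that $g_0\I_{4,2}\in\OME$ (both $g_0$ and $\I_{4,2}$ fix every $v_i$), that $\det(g_0\I_{4,2})=-1$, and that on $\HH_2$ its action is $\Psi(g_0\I_{4,2}\cdot z)=W'\cdot\Psi(z)$, which one reads off from $g_0\I_{4,2}=(g_0u_0\I_{4,2})\cdot(\I_{4,2}u_0\I_{4,2})$ together with the known actions of $g_0u_0\I_{4,2}\mapsto W$, $u_0u_1\mapsto g(A)$, $u_1\mapsto T$. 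This is more direct than searching for the right modification of $T$.
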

\begin{proof}
By Theorem \ref{group-iso}, we have isomorphism 
\[
 f : \HG_0^*(2) / \{\pm1, \pm \w, \pm \w^2 \} \longrightarrow \SOM / \{\pm 1\}.
\]
By Lemma \ref{translations}, Lemma \ref{uni} and Lemma \ref{cong-lem2}, we see that the generator

\[
 \begin{bmatrix} \I_2 & B \\ 0 & \I_2 \end{bmatrix}, \quad
\begin{bmatrix} \I_2 & 0 \\ 2B & \I_2 \end{bmatrix} 
= W \begin{bmatrix} \I_2 & -B \\ 0 & \I_2 \end{bmatrix} W^{-1}, \quad
G(2)
\]
of $\HG_1(2)$ are mapped into $\SOME / \{ \pm 1\}$ by $f$. Considering isomorphisms 
\[
 \SOM / \SOME \cong \mathrm{S}_5, \qquad \HG_0(2) / (\HG_1(2) \times \{1,\w,\w^2\}) 
\cong \GL_2(\FF_4) / \FF_4^{\times} \cong \mathrm{A}_5
\]
we see that $\SOME / \{\pm 1\} \cong \HG_1(2) / \{\pm1 \}$. Since we have
\[
 \OME = \SOME \cup (g_0 \I_{4,2})\SOME
\]
and $g_0 \I_{4,2} \cdot z = W' \cdot \Psi(z)$, we see that
\[
 \OME / \{\pm 1\} \cong \HG_1(2) / \{\pm1 \} \rtimes \left< W' \right>.
\]
\end{proof}
\section{Heegner divisors}
\begin{prop}
Heegener divisors (1), (2) and (3) in Theorem \ref{th2} and the Kummer locus correspond to
$\HG_0^*(2) \rtimes \left< T \right>$-orbit of
\begin{align*}
(1) \ \mathcal{H}_{node} = \{ \tau \in \HH_2 \ | \  2 \det \tau = -1 \}, \quad
(2) \ \mathcal{H}_{Eck} = \{ \tau \in \HH_2 \ | \ {}^t \tau = -\tau \}, \\
(3) \ \mathcal{H}_{NS} = \{ \tau \in \HH_2 \ | \ {}^t \tau = \tau \}, \quad
(4) \ \mathcal{H}_{Km} = \{ \tau + \frac{1}{2} \begin{bmatrix} 0 & \w \\ \w^2 & 0 \end{bmatrix}
 \ | \ \tau \in \mathcal{H}_{NS} \}.
\end{align*}
\end{prop}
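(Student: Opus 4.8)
The plan is to identify each Heegner divisor as the locus where a specific rank-one sublattice of $M$ (generated by a vector whose square matches the square of the relevant summand lost when passing from $U\oplus U(2)\oplus A_2(2)$ to the listed transcendental lattice) becomes algebraic, translate that sublattice condition into an equation on $\DD_M^+$ via the explicit coordinates $z=[1:z_2:\cdots:z_6]$, and then push it through the isomorphism $\Psi$ to $\HH_2$. Concretely, a divisor on $\DD_M^+$ of the form $\{z : \langle z, v\rangle = 0\}$ for $v \in M$ becomes, after applying $\Psi$, a linear-in-$\tau$ (or determinantal) condition on $\tau$, since $z_2 = -2\det\Psi(z)$ and $z_3,z_4,z_5,z_6$ are recovered from the entries of $\Psi(z)$. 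I would carry this out one case at a time.

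First, for case (3) $T_{NS} = U \oplus U(2) \oplus \langle -4\rangle$: the corresponding sublattice of $M = U\oplus U(2)\oplus A_2(2)$ is the $\langle -4\rangle$ sitting inside $A_2(2)$, so the Heegner divisor is $\{z \in \DD_M^+ : z_5 = z_6\}$ (the orthogonal complement of a root of $A_2(2)$, say $e_5 - e_6$, computed against $Q$). Under $\Psi$, the off-diagonal entries are $z_5 + \w z_6$ and $z_5 + \w^2 z_6$, whose difference is $(\w - \w^2)(z_5 - z_6) = 0$ iff $z_5 = z_6$, hence ${}^t\tau = \tau$; this gives $\mathcal{H}_{NS}$. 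For case (2) $T_{Eck} = U\oplus U(2)\oplus\langle -12\rangle$, the sublattice is a $\langle -12\rangle$ inside $A_2(2)$, e.g. generated by $e_5 + e_6$ (square $-4 -4 + 2\cdot 2 \cdot ? $ — I would check it equals $-12$ using the Gram matrix $\begin{bmatrix} -4 & 2 \\ 2 & -4\end{bmatrix}$, getting $-4-4-2\cdot 2 = $ wait, $\langle e_5+e_6, e_5+e_6\rangle = -4 + 2\cdot 2 - 4 = $ that is $-4$; instead $e_5 - e_6$ gives $-4 - 4 - 4 = -12$), so the divisor is the orthogonal complement $z_5 = -z_6$, which under $\Psi$ forces the off-diagonal entries $z_5 + \w z_6$ and $z_5 + \w^2 z_6$ to be negatives of one another, i.e. ${}^t\tau = -\tau$, giving $\mathcal{H}_{Eck}$. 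For case (1) $T_{node} = \langle 2\rangle \oplus \langle 6\rangle \oplus \langle -2\rangle^3$, the relevant extra algebraic class is a vector of square $-2$ making the transcendental lattice drop rank appropriately — here the condition should involve the $U$-summand whose hyperbolic plane gets replaced; I expect the divisor to be $\{z : z_2 = \text{(something)} \}$, and since $z_2 = -2\det\Psi(z)$, a condition like $z_2 + 2 = 0$ translates directly to $2\det\tau = -1$, yielding $\mathcal{H}_{node}$.

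For the Kummer locus (4), by Rosenberg's theorem it is cut out by $\Delta_{Km}(\mu) = 0$, and I would match it with a translate of $\mathcal{H}_{NS}$ by the $2$-torsion-type shift $\frac12\begin{bmatrix} 0 & \w \\ \w^2 & 0\end{bmatrix}$; the point is that the Kummer condition, unlike the others, is not given by a sublattice of $M$ itself but by a coset — geometrically the Kummer lattice is an overlattice — so the Heegner divisor lives in a translated copy of the $\mathcal{H}_{NS}$-divisor, and I would verify that the translation vector is exactly the one given by comparing discriminant-form data (the element of $\check M / M$ of order... or rather a half-integral shift stabilized by the stabilizer of the divisor). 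Throughout, I would note that these divisors are only well-defined up to the action of $\OME$ (equivalently $\HG_1(2)$, hence up to $\HG_0^*(2)\rtimes\langle T\rangle$ after accounting for the $\mathrm{S}_5\times\langle T\rangle$-quotient), which is why the statement asserts they correspond to orbits.

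The main obstacle I expect is case (1) and case (4): identifying precisely which primitive vector (or coset representative) in $M$ — up to $\OME$-equivalence — corresponds to the "general cubic surface with a node" and to the Kummer condition, and then checking that the $\HG_0^*(2)\rtimes\langle T\rangle$-orbit of the resulting divisor in $\HH_2$ is exactly the stated locus rather than a proper sub-orbit or a union of several. The bookkeeping of squares in $A_2(2)$ (distinguishing the $\langle -4\rangle$, $\langle -12\rangle$ and $\langle -2\rangle$-type vectors and confirming that the Dardanelli–van Geemen lattices in Theorem \ref{th2} force exactly these orthogonal-complement conditions) is routine but must be done carefully; the genuinely delicate point is the coset/translation structure underlying $\mathcal{H}_{Km}$, for which I would lean on Rosenberg's explicit equation $\Delta_{Km}(\mu) = 0$ as the final consistency check by specializing to a one-parameter subfamily and comparing periods.
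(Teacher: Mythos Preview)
Your overall strategy --- pick a primitive vector $v\in M$ with $v^{\perp}$ isometric to the target transcendental lattice, then translate $\langle z,v\rangle=0$ through $\Psi$ --- is exactly the paper's approach. But your execution contains errors. In cases (2) and (3) the difference of the off-diagonal entries of $\Psi(z)$ is $(z_5+\omega z_6)-(z_5+\omega^2 z_6)=(\omega-\omega^2)z_6$, not $(\omega-\omega^2)(z_5-z_6)$; so ${}^t\tau=\tau$ is equivalent to $z_6=0$, not to $z_5=z_6$, and similarly $z_5=-z_6$ does \emph{not} make the off-diagonals negatives of one another. The paper instead takes $v=e_5+2e_6$ (square $-12$) for $T_{NS}$, giving $z_6=0$ directly, and $v=e_5$ (square $-4$) for $T_{Eck}$, giving $z_6=2z_5$. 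Your vectors $e_5\pm e_6$ are orbit-equivalent to these, but with them you would still owe an explicit group element carrying your locus to the stated representative. For case (1) the paper takes $v=e_1-e_2$ and writes down an explicit orthogonal basis of $\langle e_1-e_2\rangle^{\perp}$ realizing $\langle 2\rangle\oplus\langle 6\rangle\oplus\langle -2\rangle^3$; then $z\perp(e_1-e_2)$ reads $z_2=1$, i.e.\ $2\det\tau=-1$.

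The substantive gap is case (4). You claim the Kummer locus is ``not given by a sublattice of $M$ itself but by a coset'' and plan an overlattice/half-integral-shift argument checked against Rosenberg's equation. This is a misconception: the paper handles the Kummer case exactly like the others, taking $v=3e_2+e_5+2e_6\in M$, verifying $v^{\perp}\cong \mathrm{U}(2)\oplus \mathrm{U}(2)\oplus\langle -4\rangle$, and reading off $z\perp v\Leftrightarrow 2z_6=1$, which is precisely the stated translate of $\mathcal{H}_{NS}$ by $\tfrac12\begin{bmatrix}0&\omega\\ \omega^2&0\end{bmatrix}$. No coset or discriminant-form machinery is needed; the Kummer divisor is an ordinary Heegner divisor for $M$.
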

\begin{proof}
In the lattice $M$, we have $\left< e_1 - e_2 \right>^{\perp} 
= \left< 2 \right> \oplus \left< 6 \right> \oplus \left< -2 \right>^3 = T_{node}$. 
In fact, an orthogonal basis of $T_{node}$ is given by
\begin{align*}
  e_1 + e_2 + e_3,\qquad 3e_1 + 3e_2 - 3e_4 + e_5 + 2e_6, \qquad e_1 + e_2 + e_3 - e_4, \\ 
-e_1 - e_2 + e_4 - e_6, \qquad -e_1 - e_2 + e_4 - e_5 -e_6.
\end{align*}
Moreover we have 
\[
 z \perp (e_1 - e_2) \ \Leftrightarrow \ z_2 = 1 \ \Leftrightarrow \ 2 \det \Psi(z) = -1, \qquad
(z = [1:z_2:z_3:z_4:z_5:z_6] \in \DD_M^+).
\]
Similarly we have
\begin{align*}
\left< e_5 \right>^{\perp} &= \left< e_1, e_2, e_3, e_4, e_5 + 2e_6 \right>
= \mathrm{U} \oplus \mathrm{U}(2) \oplus \left< -12 \right> = T_{Eck}, \\
 \left< e_5 + 2 e_6 \right>^{\perp} &= \left< e_1, e_2, e_3, e_4, e_5 \right>
= \mathrm{U} \oplus \mathrm{U}(2) \oplus \left< -4 \right> = T_{NS} \\
\left< 3e_2 + e_5 + 2 e_6 \right>^{\perp} &= \left< e_2,\ 2e_1 + e_2 +e_5 + e_6,\ e_3,\ e_4,\ e_2 + e_5 \right>
= \mathrm{U}(2) \oplus \mathrm{U}(2) \oplus \left< -4 \right> = T_{Km}
\end{align*}
and
\begin{align*}
z \perp e_5 \ \Leftrightarrow \ z_6 = 2 z_5 \ \Leftrightarrow \ {}^t\Psi(z) = -\Psi(z), \qquad
z \perp (e_5 + 2 e_6) \ \Leftrightarrow \ z_6 = 0 \ \Leftrightarrow \ {}^t\Psi(z) = \Psi(z), \\
z \perp (3e_2 + e_5 + 2 e_6) \ \Leftrightarrow \ 2z_6 =1  \ \Leftrightarrow \ \Psi(z) = 
\begin{bmatrix} z_3 & z_5 + \frac{\w}{2} \\ z_5 + \frac{\w^2}{2} & z_4 \end{bmatrix}.
\end{align*}
\end{proof}
For a subgroup $G \subset \HG$ and a character $\chi$ of $G$, a modular form of weight $k$ with character $\chi$ is 
a holomorphic function on $\HH_2$ such taht
\[
 f((A \tau + B)(C \tau + D)^{-1}) = \chi(g) \det (C \tau + D)^k f(\tau) 
\]
for any $g = \begin{bmatrix} A & B \\ C & D \end{bmatrix} \in G$. Let $[G, k, \chi]$ be the vector space of such functions. 
In \cite{DeKr}, Dern and Krieg determined the graded ring $\oplus_{k=0}^{\infty} [\HG, k, \det^k]$. It is generated by 
Eisenstein series $E_k(\tau)$ of weight $k$ $(k=4,6,10,12)$, and Borcherds products $\phi_9(\tau)$ and $\phi_{45}(\tau)$  
vanishing exactly on $\HG$-orbits of $\mathcal{H}_{NS}$ and 
$\mathcal{H}_{Eck}$ respectively (They considered a lattice $\mathrm{U} \oplus$). By a linear map
\[
 [\HG, k, (\det)^k] \longrightarrow [\HG_0(2), k, (\det)^k], \quad f(\tau) \mapsto f(2 \tau),
\]
we obtain modular forms with respect to $\HG_0(2)$. 
\begin{prop} A modular form $\phi_9(2 \tau)$ vanishes on $\HG_0(2) \rtimes \left< T \right>$-orbit of 
$\mathcal{H}_{NS} \cup \mathcal{H}_{Km}$.
\end{prop}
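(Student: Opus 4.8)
The plan is to use the fact that $\phi_9(\tau)$ vanishes exactly on the $\HG$-orbit of $\mathcal{H}_{NS}$, so that $\phi_9(2\tau)$ vanishes exactly on the preimage of this orbit under the map $\tau \mapsto 2\tau$. Concretely, $\phi_9(2\tau) = 0$ if and only if $2\tau \in g \cdot \mathcal{H}_{NS}$ for some $g \in \HG$. Writing $\rho : \HH_2 \to \HH_2$, $\tau \mapsto 2\tau$, I need to describe the set $\rho^{-1}(\HG \cdot \mathcal{H}_{NS})$ as a union of $\HG_0(2)\rtimes\langle T\rangle$-orbits and identify which ones appear. Since $\rho$ intertwines the $\HG_0(2)$-action on the source with a subgroup action on the target (indeed $\rho \circ g = \rho(g) \circ \rho$ for a suitable conjugate), the preimage is automatically a union of $\HG_0(2)$-orbits; the point is to count and name them.

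The key computation is to determine the double cosets $\HG \cdot \mathcal{H}_{NS}$ intersected with $2\HH_2$, equivalently the orbits of $\mathcal{H}_{NS}$ and its translates under the finite index overgroup. First I would record that $\rho(\HH_2) = 2\HH_2 = \{\tau : \tau/2 \in \HH_2\}$ and that for $\gamma \in \HG$, $\rho^{-1}(\gamma\cdot 2\tau)$ lands back in $\rho(\HH_2)$ precisely when $\gamma$ normalizes the condition $C \equiv 0 \bmod 2$ up to the scaling; a short matrix calculation with $\gamma = \begin{bmatrix} A & B \\ C & D\end{bmatrix}$ and the substitution $\tau \mapsto 2\tau$ shows $\rho^{-1}\circ\gamma\circ\rho$ equals the action of $\begin{bmatrix} A & 2B \\ \frac12 C & D\end{bmatrix}$, which lies in $\HG$ iff $C \equiv 0 \bmod 2$, i.e.\ iff $\gamma \in \HG_0(2)$. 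Hence the $\HG$-orbit of $\mathcal{H}_{NS}$ pulls back to a union of $\HG_0(2)$-orbits of the loci $\rho^{-1}(\gamma \cdot \mathcal{H}_{NS})$ as $\gamma$ ranges over representatives of $\HG_0(2)\backslash \HG$.

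Next I would compute these representatives' images explicitly. Using Proposition \ref{unitary-prop}(2), $\HG/\HG_0(2)$ is a flag-type set and it suffices to track a small number of coset representatives, e.g.\ $\I_4$ and the "swap" elements built from $W$-type matrices. For $\gamma = \I_4$ one gets $\rho^{-1}(\mathcal{H}_{NS}) = \{\tau : 2\tau \in \mathcal{H}_{NS}\} = \mathcal{H}_{NS}$, since the defining condition ${}^t\tau = \tau$ is scale-invariant. For a representative involving the order-two normalizing element, the translate of $\mathcal{H}_{NS}$ acquires a half-integral off-diagonal shift; matching this against the formula $\Psi(z) = \begin{bmatrix} z_3 & z_5 + \frac{\w}{2} \\ z_5 + \frac{\w^2}{2} & z_4\end{bmatrix}$ from the proof of the previous Proposition identifies it with $\mathcal{H}_{Km}$. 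So the two orbits that occur are exactly those of $\mathcal{H}_{NS}$ and $\mathcal{H}_{Km}$, and no others survive the pullback — this is the content I must verify.

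The main obstacle is the bookkeeping in this last step: showing that among all the $\HG_0(2)$-translates of $\HG$-coset representatives applied to $\mathcal{H}_{NS}$, one gets precisely the two claimed loci and not some third component (or, conversely, that $\mathcal{H}_{NS}$ and $\mathcal{H}_{Km}$ really are inequivalent under $\HG_0(2)\rtimes\langle T\rangle$ — which follows from Theorem \ref{th2}, since $T_{NS}$ and $T_{Km}$ are non-isometric lattices). The cleanest route is to work entirely on the lattice side: transport everything through the isomorphism of Theorem \ref{group-iso} to $\OM$, where $\mathcal{H}_{NS}$ and $\mathcal{H}_{Km}$ correspond to $\langle e_5 + 2e_6\rangle^\perp$ and $\langle 3e_2 + e_5 + 2e_6\rangle^\perp$. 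Then the statement reduces to checking that the $\OM$-orbit of the vector $e_5 + 2e_6$ — which is the vanishing divisor of the pulled-back Borcherds product — decomposes into exactly two $\OM_0$-orbits, represented by $e_5 + 2e_6$ and $3e_2 + e_5 + 2e_6$. This is a finite check: compute $q_M$ and the divisibility of each vector, observe both have square $-4$ but differ in their image in $\check M/M$ (the first is primitive with trivial discriminant contribution, the second pairs nontrivially against $d_2$), and confirm $\OM_0$ cannot connect the two classes while $\OM$ can via $g_0$. I would then conclude by pulling this back through $\Psi$ and $\rho$.
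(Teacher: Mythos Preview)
Your opening strategy --- pull back the $\HG$-orbit of $\mathcal{H}_{NS}$ along $\rho:\tau\mapsto 2\tau$ and decompose the result into $\HG_0(2)$-orbits --- is exactly what the paper does, modulo a slip in the conjugation: one finds $\rho^{-1}\circ\gamma\circ\rho=\begin{bmatrix}A&\tfrac12 B\\ 2C & D\end{bmatrix}$, not the matrix with $B$ and $C$ reversed. The paper calls the resulting group $G$ and then writes down, by the same division algorithm as in Proposition~\ref{unitary-prop}, an explicit decomposition $G=\bigcup_{i=1}^{4}\HG_0(2)\,g(B_i)^*$ with four half-integral translations $B_i$; checking by hand where these four send $\mathcal{H}_{NS}$ yields $\mathcal{H}_{NS}$ twice and $\mathcal{H}_{Km}$, $T\cdot\mathcal{H}_{Km}$. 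That is the entire ``bookkeeping'' you flagged as the obstacle.

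Your proposed ``cleanest route'' through the lattice $M$, however, does not work as stated. You claim the vanishing divisor of $\phi_9(2\tau)$, transported via $\Psi$, is the single $\OM$-orbit of $e_5+2e_6$, which then splits into the two $\OM_0$-orbits of $e_5+2e_6$ and $3e_2+e_5+2e_6$. But Theorem~\ref{group-iso} identifies $\OM/\{\pm1\}$ with $\HG_0^*(2)\rtimes\langle T\rangle$ modulo center, \emph{not} with $\HG$ or its conjugate $G$, so there is no reason a $G$-orbit should be an $\OM$-orbit. And in fact it is not: the vectors $e_5+2e_6$ and $3e_2+e_5+2e_6$ lie in \emph{different} $\OM$-orbits, since their orthogonal complements $T_{NS}=\mathrm{U}\oplus\mathrm{U}(2)\oplus\langle-4\rangle$ and $T_{Km}=\mathrm{U}(2)^{2}\oplus\langle-4\rangle$ are non-isometric lattices. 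Concretely, $g_0$ acts only on the $e_1,e_2$ coordinates and fixes $e_5+2e_6$, so the $\OM$-orbit of $e_5+2e_6$ equals its $\OM_0$-orbit and can never pick up the Kummer component. (Incidentally both vectors have square $-12$, not $-4$, and both pair trivially with $d_2$.) The explicit coset computation on the Hermitian side is therefore not a detour you can avoid; it is where the Kummer locus actually enters.
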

\begin{proof}
Note that
\begin{align*}
 \phi_9(2\tau') = 0 \quad &\Leftrightarrow \quad 2 \tau'= (A\tau + B)(C\tau + D)^{-1} \\
&\Leftrightarrow \quad \tau' 
= \begin{bmatrix} A & \frac{1}{2}B \\ 2C & D\end{bmatrix} \cdot \frac{1}{2} \tau \qquad \qquad
(\begin{bmatrix} A & B \\ C & D\end{bmatrix} \in \HG, \ \tau \in \mathcal{H}_{NS}).
\end{align*}
Therefore the zero divisor of $\phi_9(2\tau)$ is the orbit of $\mathcal{H}_{NS}$ under the action of the group
\[
 G = \{\begin{bmatrix} A & \frac{1}{2}B \\ 2C & D\end{bmatrix} \ | \ 
\begin{bmatrix} A & B \\ C & D\end{bmatrix} \in \HG \}.
\]
By the same argument with Propostion \ref{unitary-prop}, we see that 
\[
 G = \bigcup_{i=1}^4 \HG_0(2) g^*(B_i), \qquad
B_1 = \begin{bmatrix} 1 & 0 \\ 0 & 0 \end{bmatrix}, \quad 
B_2 = \begin{bmatrix} 0 & 0 \\ 0 & 1 \end{bmatrix}, \quad
B_3 = \begin{bmatrix} 0 & \w \\ \w^2 & 0 \end{bmatrix}, \quad
B_4 = \begin{bmatrix} 0 & \w^2 \\ \w & 0 \end{bmatrix}.
\]
Now the proposition follows from the facts that
\[
 T \cdot \mathcal{H}_{NS} = g^*(B_i) \cdot \mathcal{H}_{NS} = \mathcal{H}_{NS} \quad (i =1,2), \quad 
g^*(B_3) \cdot \mathcal{H}_{NS} = \mathcal{H}_{Km}, \quad
g^*(B_4) \cdot \mathcal{H}_{NS} = T \cdot \mathcal{H}_{Km}.
\] 
\end{proof}
\section{Miscellaneous}
In \cite{FH}, Freitag and Hermann gave a degree two map $\HH_2 / \HG(2) \rightarrow X_{32} \subset \PP^5$ 
by thete functions, wehre $X_{32}$ is a $W(\mathrm{E}_6)$-invariant hypersurface of degree $32$ and
\[
 \HG(2) = \{ g \in \HG \ |\ g \equiv \I_2 \mod 2 \}.
\]
In \cite{K2}, Kond\={o} gave an embedding of the moduli 
space of Enriques surfaces with levele $2$ structure. The restriction of Kond\={o}'s map would give a similar map.
On the other hand, the group $W(\mathrm{E}_6)$ acts on the moduli space of ordered $6$  points on $\PP^2$. 
One may ask relation between
\[
 \{ \text{ordered} \ 6 \ \text{points on} \ \PP^2 \} \longrightarrow 
\{ \text{Hessian K3 surfaces} \}
\]
and $\HH_2 / \HG(2) \rightarrow \HH_2 / \HG_1(2)$. 
\\ \indent
The Hermitian modular variety $\HH_2 / \HG$ is considered as a moduli space of Abelian $4$-folds of 
Weil type. It is interesting to study the Kuga-Satake-Hodge correspondence for Hessian 
K3 surfaces. A geometric correspondence as in \cite{Pa} is desired.

\end{document}